\numberwithin{equation}{section}
\newcommand{\beq}{\begin{equation}}
\newcommand{\eeq}{\end{equation}}
\newcommand{\beqs}{\begin{eqnarray*}}
\newcommand{\eeqs}{\end{eqnarray*}}
\newcommand{\beqn}{\begin{eqnarray}}
\newcommand{\eeqn}{\end{eqnarray}}
\newcommand{\beqa}{\begin{array}}
\newcommand{\eeqa}{\end{array}}
\newcommand{\R}{\mathbb R}
\newcommand{\calF}{{\mathcal F}}
\newcommand{\e}{\varepsilon}
\newcommand{\p}{\partial}
\newenvironment{myindentpar}[1]%
{\begin{list}{}%
         {\setlength{\leftmargin}{#1}}%
         \item[]%
}
{\end{list}}
\newtheorem{prop}{Proposition}[section]
\newtheorem{thm}[prop]{Theorem}
\newtheorem{lem}[prop]{Lemma}
\newtheorem{rem}[prop]{Remark}
\renewcommand{\div}{\mbox{div}\,}
\newcommand{\trace}{\mbox{trace}\,}
\newcommand{\dist}{\text{dist}}
\author{Nam Q. Le}
\address{Department of Mathematics, Indiana University, 
Bloomington, IN 47405, USA. }
\email {nqle@indiana.edu}
\author{Bin Zhou}
\address{School of Mathematical Sciences, Peking University, Beijing, 100871, China}
\email{bzhou@pku.edu.cn}
\thanks{The research of the first author was supported in part by NSF grant DMS-1764248.  The second author was supported by NSFC 11571018 and 11822101}
\title[Singular fourth order equations of Monge-Amp\`ere type]{Solvability of a class of singular fourth order equations of Monge-Amp\`ere type}
\begin{document}
\subjclass[2020]{35J40, 35J35,  35J96, 35B65}
\keywords{Monge-Amp\`ere equation, linearized Monge-Amp\`ere equation, Legendre transform, partial Legendre transform, singular Abreu equation}
\begin{abstract}
We study the solvability of the second boundary value problem for a class of highly singular fourth order equations of Monge-Amp\`ere type. They arise in the approximation of convex functionals subject to a convexity constraint using Abreu type equations. 
Both the Legendre transform and partial Legendre transform are used in our analysis. In two dimensions, we establish global solutions to the second boundary value problem for highly singular Abreu equations where the right hand sides are of $q$-Laplacian type for all $q>1$. We show that minimizers of variational problems with a convexity constraint in two dimensions that arise from the Rochet-Chon\'e model in the monopolist's problem in economics with $q$-power cost can be approximated in the uniform norm by solutions of the Abreu equation for a full range of $q$.
\end{abstract}

\maketitle

\section{Introduction}
\subsection{Singular fourth order equations of Monge-Amp\`ere type} This paper is concerned with a class of 
fourth order equations of 
Monge-Amp\`ere type 
\beq\label{4-eq}
\sum_{i, j=1}^n U^{ij}D_{ij}[G'(\det D^2 u)]\equiv \trace\left[(\det D^2 u) (D^2 u)^{-1} D^2 (G'(\det D^2 u))\right]=f \quad\text{in }\Omega\subset \R^n.
\eeq
Here and throughout, $n\geq 2$, $U=(U^{ij})_{1\leq i, j\leq n}=(\det D^2 u)(D^2 u)^{-1}$ is the cofactor matrix of the Hessian matrix $D^2 u=(D_{ij}u)_{1\leq i, j\leq n}\equiv \left(\frac{\p^2 u}{\p x_i \p x_j}\right)_{1\leq i, j\leq n}$
of an unknown uniformly convex function $u$, and $G:(0, \infty)\rightarrow \R$ is a smooth function satisfying certain conditions.
Equations of this type appear in many contexts ranging from affine geometry, complex geometry and economics.

Equation (\ref{4-eq}) with  the right hand side $f=f(x)$ has been extensively studied. When $G(t)=t^{\frac{1}{n+2}}$,  it is the {\it prescribed affine mean curvature equation} in affine geometry \cite{Ch}.
When $G(t) =\log t$, it is the {\it Abreu's equation} arising from the problem of finding extremal metrics on toric manifolds in K\"ahler geometry \cite{Ab}, and it is equivalent to
$$\sum_{i,j=1}^n\frac{\partial^2 u^{ij}}{\partial x_i\partial x_j}=f(x),$$
where $(u^{ij})$ is the inverse matrix of $D^2u$.
The regularity  and solvability of these equations and related geometric problems have been extensively investigated in the past two decades, including \cite{D1, D2, D3, D4, TW1, TW2, TW3, Z1, Z2, CW, CHLS, Le1, Le2}, to name a few. In all these works, the regularity theory  of linearized Monge-Amp\`ere equation, initiated in the fundamental work of Caffarelli-Guti\'errez \cite{CG}, plays an important role.  The second order operator $L_u:=U^{ij} D_{ij}$ is called a linearized Monge-Amp\`ere operator because
the coefficient matrix $(U^{ij})$ comes from the linearization of the Monge-Amp\`ere operator: 
$$U=\frac{\p (\det D^2 u)}{\p (D^2 u)}.$$
One can also note that $L_u v= U^{ij} D_{ij} v= \text{trace} (UD^2 v)$ is the coefficient of $t$ in the expansion
$$\det D^2 (u + t v) =\det D^2 u + t~ \text{trace} (U D^2 v) + \cdots+ t^n \det D^2 v.$$

Recently, a class of fourth order equations of 
Monge-Amp\`ere type in the general form of 
\beq\label{4-eq-g}
U^{ij}D_{ij}[G'(\det D^2 u)]=f(x, u, Du, D^2u) 
\eeq 
has been introduced by the first author in \cite{LeCPAM, Le6, Le7} in the study of convex functionals with a convexity constraint related to the Rochet-Chon\'e model \cite{RC} for the monopolist's problem in economics; see also \cite{CR} for the precursor of this class of equation. One usually takes $G(t) = \log t$ (the Abreu type equation) or $G(t) = t^{\theta}$ where $0<\theta<1/n$ (the affine maximal surface type equation). If we view the linearized Monge-Amp\`ere operator as a non-divergence form operator, then for regularity theory, a natural integrability condition for the right hand side is $L^n$. In general, the regularity theory of linearized Monge-Amp\`ere equation with $L^n$ right hand side in previous works, including those in \cite{CG, GN1, GN2}, does not apply to \eqref{4-eq-g}, since the right hand side of (\ref{4-eq-g}) depends on $D^2u$ which is a priori only a matrix-valued measure. This is the reason why (\ref{4-eq-g}) was called {\it singular Abreu equation} in the above mentioned works. As mentioned in \cite{LeCPAM}, even when $f(x, u, Du, D^2u) = \Delta u$, the right hand side of (\ref{4-eq-g}) has low integrability, which is at most $L^{1+\e}$ for some small constant $\e$.  However, the regularity results for the linearized Monge-Amp\`ere equation with $L^{n/2+\e}$ right hand side in \cite{LN}, which makes use of the divergence form character of $L_u$ (note that, $\sum_{i=1}^n D_i U^{ij}=0$ for all $j$) via the optimal integrability of its Green's function, allow the first author to study (\ref{4-eq-g}) in two dimensions under suitable assumptions on $f$ and boundary data. In light of this, it is natural to consider (\ref{4-eq-g}) when $f$ has a divergence form structure, of which $f= \Delta u= \div (Du)$ is a particular example.

When $f$ is just the divergence of a vector field, such as $\div (|Du|^{q-2} Du)$ where $1<q<2$, the right hand side of (\ref{4-eq-g}), which contains the term $|Du|^{q-2} \Delta u$,  become more singular in $D^2 u$ and hence it does not belong to  $L^{1+\e}$. Equation (\ref{4-eq-g}) with this type of {\it highly singular} right hand side $f$ arises from the analysis of the Rochet-Chon\'e model \cite{RC} with $q$-power cost; see Section \ref{qcostsect}. Despite this singularity, $\div (|Du|^{q-2} Du)$ is the divergence of a bounded vector field when $Du$ is bounded.
In dimension two, based on the $L^{1+\e}$ integrability of the Green's function of the linearized Monge-Amp\`ere operator, interior and global H\"older regularity estimates for linearized Monge-Amp\`ere equation with the right hand being the divergence of a bounded vector field have been established in \cite{Le3, Le4}. The higher dimensional case is widely open. Nevertheless, 
these results suggest the possibility of solving certain boundary value problems for (\ref{4-eq-g}) in the case of highly singular right hand side. To apply the results in \cite{Le3, Le4}, a key step is to obtain the positive lower and upper bounds for the Hessian determinant $\det D^2 u$ of a solution $u$. 
However, the  techniques in \cite{LeCPAM, Le6, Le7} could not handle the highly singular Abreu type equations.

In this paper, we use the Legendre transform and partial Legendre transform to investigate the regularity and solvability of highly singular equation \eqref{4-eq-g}
which arises from a variational problem. This is the case when $f$ has the form
\beq\label{cond1}
f(x, u, Du, D^2u)=F_z-\sum_{i=1}^n F_{p_ix_i}-\sum_{i=1}^n F_{p_i z}D_i u-\sum_{i,j=1}^n F_{p_i p_j}D_{ij}u
\eeq
where $F=F(x, z, p)$ is a function on $\mathbb R^n\times \mathbb R \times \mathbb R^{n}$. In order words, (\ref{4-eq-g})
is the Euler-Lagrange equation of the functional
\beq\label{func0}
\mathcal F(u)=\int_\Omega F(x, u, Du)\, dx-\int_\Omega G(\det D^2u)\,dx.
\eeq
The Legendre transform has been extensively used in the analysis of the Monge-Amp\`ere type equations including the fourth order (\ref{4-eq}) with $f$ depending only on the independent variable $x$; see, for example \cite{CHLS, Le1, TW1, TW2, Z1, Z2} and the references therein. Likewise, the partial Legendre transform has been widely used to investigate fine properties of Monge-Amp\`ere equations, especially degenerate ones; see, for example \cite{DS, GP, LS, Li} 
and the references therein. However, to the best of our knowledge, it has not been used in fourth order equations of Monge-Amp\`ere type before. For equation of the type (\ref{4-eq-g}), the Legendre transform usually gives one-sided bound for $\det D^2 u$. In two dimensions, it turns out out the other one-sided bound can be obtained using the partial Legendre transform.

 Note that (\ref{4-eq-g}) can be written as a system of two equations for $u$ and $w=G'(\det D^2 u)$. One is a Monge-Amp\`ere equation for $u$ in the form of $$\det D^2 u=
(G')^{-1}(w)$$ and other is a linearized Monge-Amp\`ere equation for $w$ in the form of $$L_u w=U^{ij} D_{ij }w=f(\cdot, u, Du, D^2 u).$$ Thus, a very natural boundary value problem for (\ref{4-eq-g}) is the second boundary value problem where one describes the values of $u$ and $w$ on the boundary $\p\Omega$.   In what follows, when $u$ is only $C^3$, the expression $\sum_{i, j=1}^{n}U^{ij}D_{ij}w$ is understood as $\sum_{i, j=1}^{n}D_i(U^{ij}D_{j}w)$.
\subsection{Solvability of the second boundary value problem for highly singular Abreu equations}
Let $q\in (1,\infty)$. Let $\Omega$ be an open, smooth, bounded and uniformly convex domain in $\R^n$. Let $\varphi\in C^{5}(\overline{\Omega})$, $\psi\in C^{3}(\overline{\Omega})$ with $\inf_{\p\Omega}\psi>0$. 

We are interested in the second boundary value problem of the Abreu equation with right hand side of $q$-Laplacian type for a uniformly convex function $u$:
\begin{equation}
\label{Abreu}
  \left\{ 
  \begin{alignedat}{2}\sum_{i, j=1}^{n}U^{ij}D_{ij}w~& =-\div (|Du|^{q-2} Du)+ F^0_z(x, u)~&&\text{in} ~\Omega, \\\
 w~&= (\det D^2 u)^{-1}~&&\text{in}~ \Omega,\\\
u ~&=\varphi~&&\text{on}~\p \Omega,\\\
w ~&= \psi~&&\text{on}~\p \Omega.
\end{alignedat}
\right.
\end{equation}
Here $F^0(x, z)$ is a function on $\mathbb R^n\times\mathbb R$.
The particular form of the right hand side of (\ref{Abreu}) was partially motivated by problems from economics; see (\ref{Abreuq}).
Of course, one can replace the term $-\div (|Du|^{q-2} Du)$ in (\ref{Abreu}) by other expressions involving $Du$ and $D^2u$. We chose this particular form in (\ref{Abreu}) due to its simplicity; moreover, this form still captures the highly singular feature of the right hand side in the Hessian $D^2u$ which is the case  when $1<q<2$ and $Du$ is small, or when $q>2$ and $Du$ is large. It should be emphasized that the negative sign in $-\div (|Du|^{q-2} Du)$ in (\ref{Abreu}) is crucial. If this term is replaced by $\div (|Du|^{q-2} Du)$, then (\ref{Abreu}) might not have a global, smooth and uniformly convex solution; see \cite[Remark 1.5]{Le6}.

  In two dimensions, equation (\ref{Abreu}) in the case of $q\geq 2$ and $F^0_z\equiv 0$ was considered in \cite[Theorem 2.6]{LeCPAM}. The case $1<q<2$, even if $F^0_z\equiv 0$, was left open.
  Also left open in the two dimensions was the case of $q\geq 2$ and $F^0_z\not\equiv 0$.

  The solvability of (\ref{Abreu}) is usually established using higher order a priori estimates and degree theory. As mentioned earlier, a critical step is to 
obtain the a priori lower and upper bounds for $\det D^2 u$ when $1<q<2$ and $F^0_z\not\equiv 0$. All known arguments in obtaining the lower bound for $\det D^2 u$ use two ingredients:
  (i) $ \div (|Du|^{q-2} Du) \leq C \Delta u$ when $|Du|$ is bounded; and (ii) $\trace (U^{ij})=\Delta u$.  Thus, they are applicable only to $q\geq 2$ and $n=2$. 
Here we use the Legendre transform to study (\ref{Abreu}).
 We resolve the remaining cases $1<q<\infty$ and $F^0_z\not\equiv 0$ in dimensions $n=2$.
 
 We assume that $F^0$ is smooth and satisfies
 \begin{equation}
 \label{Qcond}
 F^0_z(x, z) \leq \omega(|z|);\quad 
 -F^0_z(x, z)(z-\tilde z)\leq \omega (|\tilde z|)\quad \text{for all } x\in\Omega,\quad \text{and all }z, \tilde z\in \R
 \end{equation}
 where $\omega:[0,\infty)\rightarrow [0,\infty)$ is a continuous and increasing function.  
 
 Our first main theorem states as follows.
 \begin{thm}[Solvability of the second boundary value problem for highly singular Abreu equations]
\label{SBVq12}
Let $\Omega\subset\R^2$ be an open, smooth, bounded and uniformly convex domain.  Let $q>1$. Assume (\ref{Qcond}) holds.
Assume that $\varphi\in C^{5}(\overline{\Omega})$ and $\psi\in C^{3}(\overline{\Omega})$ with $\inf_{\p \Omega}\psi>0$.
Consider  the following
second boundary value problem:
\begin{equation}
\label{Abreu4}
  \left\{ 
  \begin{alignedat}{2}\sum_{i, j=1}^{2}U^{ij}D_{ij}w~& =-div (|Du|^{q-2} Du) + F^0_z(x, u)~&&\text{in} ~\Omega, \\\
 w~&= (\det D^2 u)^{-1}~&&\text{in}~ \Omega,\\\
u ~&=\varphi~&&\text{on}~\p \Omega,\\\
w ~&= \psi~&&\text{on}~\p \Omega.
\end{alignedat}
\right.
\end{equation}
\begin{myindentpar}{1cm}
(i) If $q\geq 2$, then there exists a  uniformly convex solution $u\in C^{4, \beta}(\overline{\Omega})$ to (\ref{Abreu4}) with 
$$\|u\|_{C^{4,\beta}(\overline{\Omega})}\leq C$$
for some $\beta\in (0, 1)$ and $C>0$ depending on $q,\Omega$, $\omega$, $F^0$, $\varphi$ and $\psi$. \\
(ii) If $1<q<2$, then there exists a  uniformly convex solution $u\in C^{3, \beta}(\overline{\Omega})$ to (\ref{Abreu4}) with 
$$\|u\|_{C^{3,\beta}(\overline{\Omega})}\leq C$$
for some $\beta\in (0, 1)$ and $C>0$ depending on $q,\Omega$, $\omega$, $F^0$, $\varphi$ and $\psi$.
\end{myindentpar}
\end{thm}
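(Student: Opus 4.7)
Our strategy is the continuity method together with Leray--Schauder degree theory: we embed \eqref{Abreu4} in a one-parameter family interpolating between a base problem (e.g.\ with both $-\div(|Du|^{q-2}Du)$ and $F^0_z$ scaled down to $0$, where the classical Abreu equation is solvable by \cite{TW1,TW2}) and the target, so that existence reduces to a priori estimates uniform in the parameter. The equation is the Euler--Lagrange system, in the $(u,w)$ formulation with $w=(\det D^2 u)^{-1}$, associated with the functional
$$\mathcal F(u)=\int_\Omega\Big[\tfrac{1}{q}|Du|^q+F^0(x,u)\Big]\,dx-\int_\Omega G(\det D^2 u)\,dx,\qquad G(t)=\log t,$$
whose variational structure is what makes the Legendre transform effective. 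Hypothesis \eqref{Qcond} is designed precisely to control the nonlinear $F^0$ contribution in the ensuing energy estimates.

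The crux of the argument is the a priori bound $0<c\le \det D^2 u\le C$ on $\overline\Omega$, which we derive by combining the Legendre and partial Legendre transforms. The Legendre dual $u^*$ on $\Omega^*:=Du(\Omega)$ satisfies $\det D^2 u^*=w$ and converts $\sum U^{ij}D_{ij}$ into a divergence-form elliptic operator in the $y$-variables, with the troublesome term $|Du|^{q-2}Du$ becoming simply $|y|^{q-2}y$. Energy estimates or a maximum principle in the dual, combined with the bounded range of $Du$ forced by $\varphi$ and the uniform convexity of $\Omega$, then produce one of the two bounds on $\det D^2u$. The complementary bound is where $n=2$ is essential: the partial Legendre transform (Legendre in one variable only) recasts the Monge--Amp\`ere structure in two dimensions as a quasilinear elliptic equation with nondegenerate principal part, and a maximum principle in the transformed coordinates supplies the missing bound. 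Tracking the boundary data $\varphi,\psi$ under each transform, and arranging the computation so the estimates survive the singularity of $|p|^{q-2}p$ at $p=0$ when $1<q<2$, are the delicate points.

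Once $c\le\det D^2 u\le C$ is in hand, the linearized Monge--Amp\`ere equation for $w$ has right-hand side $-\div(|Du|^{q-2}Du)+F^0_z(x,u)$, that is, the divergence of a bounded vector field plus a bounded term. The global H\"older regularity theory of \cite{Le3,Le4} for linearized Monge--Amp\`ere equations with precisely such right-hand sides then yields $w\in C^\alpha(\overline\Omega)$. A Schauder bootstrap applied to $\det D^2 u=w^{-1}$ (using $u=\varphi\in C^5$ on $\partial\Omega$ and uniform convexity of $\Omega$) promotes $u$ to $C^{2,\alpha}(\overline\Omega)$. From there, further differentiation is governed by the regularity of $p\mapsto|p|^{q-2}p$: for $q\ge 2$ this map is $C^{1,\alpha'}$, so $w\in C^{2,\alpha}$ and hence $u\in C^{4,\beta}$; for $1<q<2$ the map is only H\"older continuous, yielding $w\in C^{1,\alpha}$ and $u\in C^{3,\beta}$. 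This accounts for the split between parts (i) and (ii). Given the a priori estimates, existence follows by a standard degree-theoretic argument in a suitable H\"older space.

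The principal obstacle is the regime $1<q<2$. In earlier works \cite{LeCPAM,Le6,Le7}, a lower bound on $w$ was obtained from $\div(|Du|^{q-2}Du)\le C\Delta u$ together with $\trace(U^{ij})=\Delta u$ in 2D, an argument that fails when $|Du|^{q-2}$ can blow up. The partial Legendre transform is the main new ingredient that sidesteps this difficulty: it replaces the pointwise comparison by a uniformly elliptic equation in transformed coordinates where a classical maximum principle applies. Carrying this through in the presence of the nonlinear term $F^0_z(x,u)$ under the structural hypothesis \eqref{Qcond}, and uniformly along the continuation path (so that the bounds do not degenerate as $Du$ becomes small or as one approaches $q=1$), is the technical heart of the proof.
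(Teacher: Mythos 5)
The proposal identifies the right toolbox — a priori bounds on $\det D^2 u$ via the Legendre and partial Legendre transforms, global H\"older estimates for linearized Monge--Amp\`ere equations with divergence-form right-hand side from \cite{Le3,Le4}, and a degree/continuity argument — but it contains a genuine gap precisely in the regime $1<q<2$, which is the main novelty of the theorem. Your plan is to apply the partial Legendre transform \emph{directly} to solutions of \eqref{Abreu4}, track the boundary data, and ``arrange the computation so the estimates survive the singularity of $|p|^{q-2}p$ at $p=0$.'' But the partial Legendre maximum-principle computation (the one that yields the upper bound $\det D^2 u\le C$ by applying a non-divergence-form elliptic maximum principle to $Z=\log w^\star+\alpha(\xi u^\star_\xi-u^\star)$) requires $w^\star\in C^2$, which is equivalent to $u\in C^4$. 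For $1<q<2$, solutions of \eqref{Abreu4} are only $C^{3,\beta}$ in general (this is even reflected in the statement of the theorem itself), so the argument as proposed cannot be run on \eqref{Abreu4}: there is no $C^4$ solution to apply the maximum principle to. No rearrangement of the computation fixes this, because the obstruction is a regularity one, not a structural one, and your vague clause does not supply a mechanism.

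The paper closes this gap by a regularization step that the proposal does not contain. One replaces $|Du|^{q-2}Du$ by $(|Du|^2+\delta(\e))^{\frac{q-2}{2}}Du$ with $\delta(\e)=\e$, obtaining a smooth integrand in $p$; the regularized second boundary value problem \eqref{Abreu4e} then has global $C^{4,\gamma}(\overline\Omega)$ solutions $u_\e$ (via degree theory as in \cite{LeCPAM}), and for these one can legitimately form $w_\e^\star$ of class $C^2$ and run the partial Legendre maximum principle. The crucial point is that the resulting bounds $0<C_3\le\det D^2 u_\e\le C_2$, and the subsequent $C^{3,\beta}$ (for $1<q<2$) or $C^{4,\beta}$ (for $q\ge2$) estimates, are \emph{independent of $\e$}; Theorem~\ref{SBVq12} is then obtained by letting $\e\to 0$. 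So the logical order is: regularize first, apply the two transforms to the regularized $C^4$ solutions, then pass to the limit. Also note a sign of a secondary confusion in the proposal: you assert the Legendre transform ``converts $\sum U^{ij}D_{ij}$ into a divergence-form operator,'' but $\sum_i D_i U^{ij}=0$ already makes the operator divergence-form before transforming; what the Legendre transform actually buys (as in Proposition~\ref{new2}) is a dual linearized Monge--Amp\`ere equation in $\Omega^\ast=Du(\Omega)$ in which the $q$-Laplacian term becomes a zeroth-order absorbable quantity, so that a maximum principle gives the \emph{lower} bound for $\det D^2 u$; the partial Legendre transform supplies the complementary \emph{upper} bound, and this division of labor should be stated. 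Finally, the continuity path you describe (scaling the whole right-hand side to zero) is not the obstacle; the obstacle is that along any such path, for $1<q<2$, you still lack the $C^4$ regularity needed to run the partial Legendre estimate, so the regularization in $\delta(\e)$ is unavoidable and is the missing idea in the proposal.
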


Theorem \ref{SBVq12} will be proved in Section \ref{q12esect}. The main idea of the proof of Theorem \ref{SBVq12} is to use partial Legendre transform. After the partial Legendre transformation, the first two equations of  \eqref{Abreu4} become a quasi-linear elliptic equation for the dual $w^\star$ of $w$.
To estimate  the a priori lower and upper bounds for $\det D^2 u$ when $1<q<2$ and $F^0_z\not\equiv 0$, we need the $C^2$ character of $w^\star$ (in order to apply the maximum principle to an elliptic equation in non-divergence form) which is equivalent to $u$ being $C^4$. This is not possible for $q\in (1,2)$. Thus, we will not apply the partial Legendre transform directly to \eqref{Abreu4}. Instead, we apply it to its approximation (\ref{Abreu4e}) whose global $C^{4}$ solutions are guaranteed.

The Legendre transform can also be used to establish interior higher order derivative estimates in higher dimensions for (\ref{Abreu}) when $F^0_z\leq 0$. This is the content of our next theorem.
\begin{thm}[Interior higher order derivative estimates for highly singular Abreu equations]
  \label{SBVn3}
  Let $n\geq 3$. Let $u \in C^{3}(\Omega)
  \cap C^2(\overline{\Omega})$ be a uniform convex solution to \eqref{Abreu} where  $F^0$ is smooth, $F^0_z\leq 0$, $\varphi\in C^{5}(\overline{\Omega})$, $\psi\in C^{3}(\overline{\Omega})$ with $\inf_{\p\Omega}\psi>0$. Then, for any $\Omega'\Subset\Omega$, we have
  $$\|u\|_{C^{4,\alpha}(\Omega')}\leq C\quad \text{if } q\geq 2$$
  and
    $$\|u\|_{C^{3,\alpha}(\Omega')}\leq C \quad \text{if } 1<q<2$$
  where $\alpha$ and $C$ depend on $\varphi, \psi,  F^0$, $\Omega, n, q$ and $\dist (\Omega',\p\Omega)$.
    \end{thm}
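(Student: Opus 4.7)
The plan is to extract interior pointwise two-sided bounds $0 < c \leq \det D^2 u \leq C$ on compact subsets of $\Omega$ via the Legendre transform, and then run the standard Caffarelli--Guti\'errez / Caffarelli bootstrap to obtain the claimed higher interior regularity.

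To carry out the bounds, let $v = u^\star$ denote the Legendre transform of $u$, defined on $\Omega^\star := Du(\Omega)$. At corresponding points $y = Du(x)$ one has $D^2 v(y) = (D^2 u(x))^{-1}$, $x = Dv(y)$, and in particular $w^\star(y) := w(x(y)) = \det D^2 v(y)$. Using the identity $U^{ij} D_{ij} w = \sum_{i,j} \partial^2 u^{ij}/\partial x_i \partial x_j$ (valid for $w = 1/\det D^2 u$ since $\sum_i D_i U^{ij} = 0$) together with the chain rule, the first equation of \eqref{Abreu} transforms into a quasilinear second-order elliptic equation for $w^\star$ on $\Omega^\star$, with source obtained by pulling back $-\div(|Du|^{q-2} Du)$ and $F^0_z(x,u)$ through $x = Dv(y)$ and $u = y\cdot Dv(y) - v(y)$. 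The hypothesis $u \in C^2(\overline{\Omega})$ together with $\varphi \in C^5(\overline{\Omega})$ and $\inf_{\partial\Omega}\psi > 0$ ensures $v \in C^2(\overline{\Omega^\star})$, with boundary values for $w^\star$ controlled by $\psi$.

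The sign hypothesis $F^0_z \leq 0$ contributes a zeroth-order term of favorable sign in the transformed equation for $w^\star$. Combined with the fact that $|Du|^{q-2} Du$ is a bounded vector field (from the a priori $C^1$ bound on $u$ given by convexity and the boundary data $\varphi$), a maximum principle argument applied to the transformed equation yields a strictly positive lower bound and a finite upper bound for $w^\star$ on each compact $K \Subset \Omega^\star$. Pulling back to $\Omega$ then gives $0 < c \leq \det D^2 u \leq C$ on each compact $\Omega'' \Subset \Omega$, with constants depending only on the data.

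Once $\det D^2 u$ is pinched between two positive constants on some $\Omega''$ with $\Omega' \Subset \Omega'' \Subset \Omega$, sections of $u$ satisfy the Aleksandrov--John conditions, and the Caffarelli--Guti\'errez H\"older estimate applied to $L_u w = f$ yields $w \in C^{\alpha}_{\mathrm{loc}}(\Omega'')$: for $q \geq 2$ the right-hand side $f$ is bounded, while for $1 < q < 2$, $f$ is the divergence of the bounded vector field $|Du|^{q-2} Du$ plus a bounded function, and divergence-form H\"older estimates for $L_u$ apply (using $\sum_i D_i U^{ij} = 0$). Caffarelli's $C^{2,\alpha}$ regularity for the Monge--Amp\`ere equation $\det D^2 u = 1/w$ then gives $u \in C^{2,\alpha}_{\mathrm{loc}}$. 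For $q \geq 2$, alternating Schauder estimates on the Monge--Amp\`ere and linearized Monge--Amp\`ere equations yield $u \in C^{4,\alpha}(\Omega')$; for $1 < q < 2$, the vector field $|Du|^{q-2} Du$ is only $C^{(q-1)\alpha}$-H\"older, so the iteration terminates at $u \in C^{3,\alpha}(\Omega')$. The main obstacle is the second step: extracting both interior bounds on $\det D^2 u$ from the Legendre-transformed equation in dimensions $n \geq 3$, where $F^0_z \leq 0$ is used decisively, substituting for the partial Legendre transform argument available in the plane for Theorem \ref{SBVq12}.
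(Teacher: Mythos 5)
Your overall strategy — Legendre transform to obtain two-sided determinant bounds, then a Caffarelli--Guti\'errez / Caffarelli / Schauder bootstrap — matches the skeleton of the paper's argument, but the execution has two genuine problems, the second of which is fatal in dimension $n\geq 3$.

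First, the claim that a universal $C^1$ bound on $u$ follows ``from convexity and the boundary data $\varphi$'' is not correct. A convex function equal to $\varphi$ on $\partial\Omega$ has an upper bound $\sup_{\partial\Omega}\varphi$ but can be arbitrarily negative (and hence have arbitrarily large gradient) unless one first controls $\det D^2 u$ from above. The paper therefore establishes $\det D^2 u\leq C$ as the very first step, using the sign structure directly on the \emph{original} equation: since $u$ is convex and $q>1$, $-\div(|Du|^{q-2}Du)\leq 0$, and with $F^0_z\leq 0$ one gets $U^{ij}D_{ij}w\leq 0$, so $w\geq\inf_{\partial\Omega}\psi>0$ by the minimum principle. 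Only then do the $L^\infty$ bound, the barrier gradient estimate, and the Legendre transform step (which gives the lower bound $\det D^2 u\geq C^{-1}$ after adding a large multiple of $u^\ast$ to compensate the bounded source $F^0_z$) become available. Your claim to extract both determinant bounds from the Legendre-transformed equation can in fact be salvaged — the convexity of $p\mapsto |p|^q/q$ plus $F^0_z\leq 0$ makes $\log\det D^2 u^\ast$ a supersolution without needing $|Du|$ bounded — but as written the ordering is circular.

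Second, and more seriously, the bootstrap step is not valid in $n\geq 3$. You propose to apply Caffarelli--Guti\'errez directly to $L_u w=f$. But for $q\geq 2$ the right-hand side $f=-|Du|^{q-2}\Delta u-(q-2)|Du|^{q-4}D_iuD_juD_{ij}u+F^0_z$ is \emph{not} bounded at this stage: $\det D^2 u$ being pinched does not bound $D^2 u$ pointwise, so $\Delta u$ is uncontrolled. And for $1<q<2$ you invoke ``divergence-form H\"older estimates for $L_u$,'' which exist only in $n=2$ (via the $L^{1+\e}$ integrability of the Green's function; see \cite{Le3, Le4}); the higher-dimensional theory is explicitly described as open in the introduction and is the reason the second boundary value problem itself is left open for $n\geq 3$. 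The essential trick in the paper is to apply the Caffarelli--Guti\'errez interior H\"older estimate not to the original equation but to the \emph{Legendre-transformed} equation
$$u^{\ast,ij}D_{ij}\bigl(|y|^q/q+\log\det D^2 u^{\ast}\bigr)=F^0_z(Du^{\ast},\,y\cdot Du^{\ast}-u^{\ast}),$$
whose right-hand side is genuinely bounded once $|u|$, $|Du|$, and the determinant pinching are in hand. This yields interior $C^\alpha$ regularity of $\log\det D^2 u^{\ast}$, which transfers to $\det D^2 u$ via the uniform interior modulus of convexity, and then Caffarelli's $C^{2,\alpha}$ theory and Schauder estimates on the \emph{original} equation (now with H\"older coefficients) complete the bootstrap. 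Without routing the Caffarelli--Guti\'errez step through the transformed equation, the argument has no foothold in $n\geq 3$.
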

We will prove Theorem \ref{SBVn3} in Section \ref{Legsect}. 
However, due to the lack of global regularity in higher dimensions for the linearized Monge-Amp\`ere equation with right hand side being the divergence of a bounded vector field, it is still an open problem to solve the second boundary value problem for \eqref{Abreu} when $n\geq 3$. Note that, when $n\geq 3$ and the right hand side of (\ref{Abreu}) is replaced by $-\gamma \div (|Du|^{q-2} Du)$ where $q\geq 2$ and $\gamma>0$ is a small constant depending on $n, q, \varphi,\psi$ and $\Omega$, the existence of a unique global $C^{4,\beta}(\overline{\Omega})$ solution to (\ref{Abreu}) was established in \cite{Le6}.  It would be interesting to remove this smallness of $\gamma$.

\subsection{Approximations of minimizers of Rochet-Chon\'e model with non-quadratic costs}

\label{qcostsect}
Let $\Omega_0$, $\Omega$ be bounded, open, smooth, and convex domains in $\R^n$ where $\Omega$ contains $\overline{\Omega_0}$.
Let $\varphi\in C^{5}(\overline{\Omega})$ be a convex function.  Let $F(x, z, p):\R^n\times \R\times \R^n\rightarrow\R$ be the Lagrangian given by
$$F(x, z, p) = (|p|^q/q- x\cdot p)\gamma(x) + F^0(x, z)$$
where $\gamma$ is a nonnegative and Lipschitz function.
We assume the following convexity and growth assumptions on $F^0$:
\begin{equation}
\label{F0q}
\small
(F_z^0(x, z)-F_z^0(x, \tilde z))(z-\tilde z)\geq 0;\ \  |F_z^0(x, z)|+  |F^0(x, z)|\leq \eta (|z|)~\text{for all } x\in\Omega_0~\text{and } z,\tilde z\in\R
\end{equation}
where $\eta:[0,\infty)\rightarrow [0,\infty)$ is a continuous and increasing function.  

When $F^0(x, z) =z\gamma(x)$, the Lagrangian $F$ covers the Rochet-Chon\'e model with $q$-power cost and relative frequency of agents in the population given by $\gamma$; see \cite[p. 790]{RC}. 
We are interested in the following variational problem subject to a convexity constraint:
\begin{equation}
\label{pb1}
\inf_{u\in \bar{S}[\varphi,\Omega_0]} \int_{\Omega_0} F(x, u(x), Du(x)) \,dx
\end{equation}
where 
\begin{multline}
\label{barS1}
\bar{S}[\varphi, \Omega_0]=\{ u: \Omega_0\rightarrow \R\mid u \text{ is convex and admits a convex extension to\ } \Omega \text{\ such that }\\   u=\varphi\text{ on }\Omega\backslash \Omega_0\}.
\end{multline}
Since functions in $\bar{S}[\varphi, \Omega_0]$ are Lipschitz continuous with Lipschitz constants bounded from above by $\|D\varphi\|_{L^{\infty}(\Omega)}$, $\bar{S}[\varphi, \Omega_0]$ is compact in the topology of uniform convergence on compact subsets of subsets of $\Omega$. With (\ref{F0q}), one can show that (\ref{pb1}) has a minimizer in $\bar{S}[\varphi, \Omega_0]$.
 Heuristically, the boundary conditions for minimizers associated with (\ref{barS1}) are 
\begin{equation}
\label{FBV}
u=\varphi \quad\text{and }\frac{\p u}{\p \nu_0}\leq \frac{\p \varphi}{\p \nu_0}\quad \text{on }\p\Omega_0\quad \text{where }\nu_0 \text{ is the unit outer normal vector on } \p\Omega_0.
\end{equation}

In \cite{RC}, Rochet-Chon\'e modeled the monopolist problem in product line design with $q$-power  cost 
   using minimization, over convex functions $u\geq \varphi$, of the functional
\begin{equation}
\label{RCeg}
\Phi(u)=\int_{\Omega_0} \left[|Du(x)|^q/q  -x\cdot Du(x)  + u(x)\right] \gamma (x) ~dx.
\end{equation}

Here $ -\Phi(u)$ is  the monopolist's profit; 
$u$ is the buyers' indirect utility function with bilinear valuation;
  $\Omega_0\subset\R^n$ is the collection of  types of agents; $\gamma$ is the relative frequency of different types of agents in the population; the given convex function $\varphi$ is referred to as  the participation constraint.
The constraint (\ref{barS1}) can be heuristically viewed as a special case of the constraint $u\geq \varphi$ in $\Omega_0$.

The convexity constraints such as $u\geq \varphi$ in (\ref{RCeg}) and (\ref{barS1}) in (\ref{pb1}) pose serious challenges, as elucidated in \cite{BCMO, Mir},  in numerically computing minimizers of the above problems. 
This calls for robust approximation schemes for minimizers of variational problems with a convexity constraint.   
The question we would like to address here is how to approximate minimizers of (\ref{pb1})  
in the uniform norm by solutions of some higher order equations whose global well-posedness can be established. 
The approximating scheme proposed in \cite{LeCPAM, Le7} use the second boundary value problem of fourth order equations of Abreu type and it only works for $q=2$ and $n=2$; see also \cite{RC} for $F$ not depending on $p$. The reason $q=2$ is that the gradient-dependent term 
$F^1(x, p)= (|p|^q/q- x\cdot p)\gamma(x)$ of the Lagrangian $F$ was required to satisfy for some $C>0$
\begin{equation}
\label{F1}
0\leq F^1_{p_i p_j}(x, p)\leq C I_n;\ \  |F^1_{p_i x_i}(x, p)| \leq  C (|p| +1) \text{ for all }x\in\Omega_0~\text{and for each } i.
\end{equation}

Inspired by the approximation equation (\ref{Abreu4e}) in the proof of Theorem \ref{SBVq12}, we will answer positively the question of approximating minimizers of (\ref{pb1}) by solutions of the second boundary value problems of fourth order equations of Monge-Amp\`ere type for the full range $(1,\infty)$ of $q$.  The idea is to modify the schemes in \cite{LeCPAM, Le7} by further approximating the gradient-dependent term. We describe this scheme below.

Let $\rho$ be a uniformly convex defining function of $\Omega$, that is, 
\begin{equation}
\label{rhoeq}
\Omega:=\{x\in \R^n: \rho(x)<0\},~\rho=0 \text{ on } \p\Omega \text{ and }D\rho\neq 0 \text{ on }\p\Omega.
\end{equation} 
 For $\e>0$, let $\delta(\e)=\e$, and
 consider the following second boundary value problem for a uniform convex function $u_\e$:
\begin{equation}
\label{Abreuq}
  \left\{ 
  \begin{alignedat}{2}\e\sum_{i, j=1}^{n}U_\e^{ij}D_{ij}w_\e~& =f_{\e}~&&\text{in} ~\Omega, \\\
 w_\e~&= (\det D^2 u_\e)^{-1}~&&\text{in}~ \Omega,\\\
u_\e ~&=\varphi~&&\text{on}~\p \Omega,\\\
w_\e ~&= \psi~&&\text{on}~\p \Omega,
\end{alignedat}
\right.
\end{equation}
where
\begin{multline}
\label{feq}
f_{\e}= \left\{\begin{array}{rl}
  \frac{\p F^0}{\p z}(x, u_\e(x)) -\displaystyle \sum_{i=1}^n \frac{\p}{\p x_i} \left(\gamma(x)[(|Du_\e|^2 +\delta(\e))^{\frac{q-2}{2}}u_{\e, x_i} -x_i]\right)&  \text{ if } x\in \Omega_0,\\[4pt]
\frac{1}{\e}\left(u_\e (x)-\varphi(x)- \e^{\frac{1}{3n^2}} (e^{\rho(x)}-1) \right) & \text{ if } x\in  \Omega\setminus \Omega_0.
\end{array}\right.
\end{multline}
The first two equations of (\ref{Abreuq}) arise as the Euler-Lagrange equation of the functional
  \begin{eqnarray}
  \label{Jfn2}
  J_{q,\e}(u)&:=&\int_{\Omega_0} \left[(|Du|^2 +\delta(\e))^{\frac{q}{2}}/q-x\cdot Du\right] \gamma(x)\,dx
+\int_{\Omega_0} F^0(x,u)\,dx\\&& -\e\int_{\Omega} \log \det D^2 u \,dx +\frac{1}{2\e} \int_{\Omega\backslash \Omega_0} (u-\varphi-\e^{\frac{1}{3n^2}} (e^{\rho(x)}-1) )^2\,dx\nonumber.
  \end{eqnarray}
Our final theorem, which is concerned with the solvability and asymptotic behavior of solutions to (\ref{Abreuq})-(\ref{feq}) when $\e\rightarrow 0$, states as follows.
\begin{thm}
\label{RCthm}
Let $\Omega_0$ and $\Omega$ be bounded, open, smooth, and convex domains in $\R^2$ ($n= 2$) where $\Omega$ is uniformly convex and contains $\overline{\Omega_0}$. 
Let $\varphi \in C^{5}(\overline{\Omega}), \psi\in C^{3}(\overline{\Omega})$ where $\varphi$ is convex, and $\inf_{\p\Omega}\psi>0$.
Assume that the smooth function $F^0$ satisfies (\ref{F0q}). Let $\gamma$ be a nonnegative and Lipschitz function on $\overline{\Omega}$. If $q>2$, then we also assume that $\gamma$ is a constant. 
If $\e>0$ is small, then, the following facts hold:
\begin{myindentpar}{1cm}
(i) The system (\ref{Abreuq})-(\ref{feq}) has  a uniformly convex solution $u_\e \in W^{4,s}(\Omega)$  for all $s\in (n,\infty)$.\\
(ii) Let $u_\e\in W^{4, s}(\Omega)$ $(s>n)$ be a solution to (\ref{Abreuq})-(\ref{feq}). Then, a subsequence of $u_\e$ converges uniformly on compact subsets of $\Omega$ to a minimizer $u\in \bar{S}[\varphi,\Omega_0]$  of (\ref{pb1}).  
\end{myindentpar}
\end{thm}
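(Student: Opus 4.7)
The plan is to establish part (i) by adapting the proof of Theorem \ref{SBVq12} to the regularized system \eqref{Abreuq}--\eqref{feq}, exploiting the fact that for each fixed $\e>0$ the gradient nonlinearity $(|Du_\e|^2+\delta(\e))^{(q-2)/2}Du_\e$ is smooth and bounded, and to establish part (ii) by exploiting the variational structure of $J_{q,\e}$. Since $F^0$ is convex in $z$ by \eqref{F0q} and $-\log\det$ is convex on the cone of positive definite matrices, the functional $J_{q,\e}$ of \eqref{Jfn2} is strictly convex in $u$, so the solution $u_\e$ of \eqref{Abreuq}--\eqref{feq} produced in part (i) is automatically the unique minimizer of $J_{q,\e}$ over smooth uniformly convex functions on $\Omega$ with boundary data $\varphi$. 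This reduces the convergence analysis in (ii) to a compactness-plus-comparison argument against $J_{q,\e}$.

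For part (i), I will treat \eqref{Abreuq}--\eqref{feq} as a second boundary value problem for the Monge--Amp\`ere/linearized Monge--Amp\`ere system whose right hand side is piecewise defined on $\Omega_0$ and $\Omega\setminus\Omega_0$. On $\Omega_0$ it has the divergence-plus-semilinear form handled in Theorem \ref{SBVq12}, with the singular factor $|Du_\e|^{q-2}$ replaced by the smooth and bounded quantity $\gamma(x)(|Du_\e|^2+\delta(\e))^{(q-2)/2}$; on $\Omega\setminus\Omega_0$ the right hand side reduces, given an $L^\infty$ bound on $u_\e$, to a bounded semilinear term in $u_\e$. The argument of Theorem \ref{SBVq12} therefore carries over with minor modifications: two-sided bounds on $\det D^2 u_\e$ via the partial Legendre transform, then gradient bounds, then linearized Monge--Amp\`ere estimates of \cite{Le3,Le4}, and finally degree theory. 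Because the right hand side jumps across $\p\Omega_0$, the best global regularity that can be obtained is $W^{4,s}(\Omega)$ rather than $C^{4,\beta}(\overline{\Omega})$, which is precisely the statement in (i).

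For part (ii), the key step is obtaining uniform (in $\e$) pointwise bounds on $u_\e$ on compact subsets of $\Omega$. The upper bound $u_\e\leq\max_{\p\Omega}\varphi$ follows from convexity of $u_\e$ and of $\varphi$. For the lower bound I use $J_{q,\e}(u_\e)\leq J_{q,\e}(\tilde v)=:C_1$ against a fixed smooth uniformly convex test function $\tilde v\in C^4(\overline{\Omega})$ matching the boundary data (for instance $\tilde v=\varphi+C_0(e^\rho-1)$ with $C_0$ large), absorb the indefinite $-x\cdot Du_\e$ term into the coercive $(|Du_\e|^2+\delta)^{q/2}/q$ term via Young's inequality, and exclude large negative excursions of $u_\e$ using the coercivity of the penalty term combined with convexity. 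Interior Lipschitz bounds for convex functions pointwise bounded on a slightly larger set then give equicontinuity, and Arzel\`a--Ascoli yields a subsequence $u_\e\to u$ uniformly on compact subsets of $\Omega$ with $u$ convex. The functional bound also gives
\[
\int_{\Omega\setminus\Omega_0}(u_\e-\varphi-\e^{1/(3n^2)}(e^\rho-1))^2\,dx\leq 2C_1\e\to 0,
\]
forcing $u=\varphi$ on $\Omega\setminus\Omega_0$, so that $u\in\bar S[\varphi,\Omega_0]$.

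To identify $u$ as a minimizer of \eqref{pb1}, given any $v\in\bar S[\varphi,\Omega_0]$ I approximate its convex extension by smooth convex $\tilde v_k$ on $\Omega$ with $\tilde v_k=\varphi$ on $\Omega\setminus\Omega_0$ (standard mollification combined with a cutoff near $\p\Omega_0$) and form the competitor $v_{k,\e}:=\tilde v_k+\e^{1/(3n^2)}(e^\rho-1)$. The shift $\e^{1/(3n^2)}(e^\rho-1)$ is engineered to match the penalty shift exactly, so the penalty term at $v_{k,\e}$ vanishes; simultaneously the uniform convexity of $\rho$ forces $\det D^2 v_{k,\e}\geq c\e^{1/(3n)}$, so $-\e\int_\Omega\log\det D^2 v_{k,\e}=O(\e|\log\e|)\to 0$. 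The inequality $J_{q,\e}(u_\e)\leq J_{q,\e}(v_{k,\e})$, together with lower semicontinuity of $v\mapsto\int_{\Omega_0}F(x,v,Dv)$ under uniform convergence of equi-Lipschitz convex functions (which uses convexity of $F$ in $p$ guaranteed by $q>1$) and passage to the limit $\delta(\e)=\e\to 0$ in the $(|Dv_{k,\e}|^2+\delta)^{q/2}/q$ term, yields, after sending $\e\to 0$ and then $k\to\infty$, the minimality inequality $\int_{\Omega_0}F(x,u,Du)\leq\int_{\Omega_0}F(x,v,Dv)$. I expect the main obstacle to be the simultaneous balancing of the three small parameters $\e$, $\delta(\e)=\e$, and $\e^{1/(3n^2)}$, together with the mollification-plus-cutoff construction of $\tilde v_k$ needed to preserve the boundary identity $\tilde v_k=\varphi$ on $\Omega\setminus\Omega_0$ while keeping $\tilde v_k$ smooth and convex; a secondary obstacle is the derivation of the uniform-in-$\e$ pointwise bound on $u_\e$ from the coercive but indefinite parts of $J_{q,\e}$.
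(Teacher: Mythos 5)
The critical gap is in part (ii), where you assert that $u_\e$ is ``automatically the unique minimizer of $J_{q,\e}$ over smooth uniformly convex functions on $\Omega$ with boundary data $\varphi$''. This does not follow. A solution of the second boundary value problem (\ref{Abreuq})--(\ref{feq}) satisfies the interior Euler--Lagrange equation, but it also satisfies the additional boundary condition $w_\e = \psi$ on $\p\Omega$, which is \emph{not} the natural boundary condition of $J_{q,\e}$. Concretely, for $\phi = 0$ on $\p\Omega$ (but $\phi_\nu$ arbitrary) the first variation $\delta J_{q,\e}(u_\e;\phi)$ contains the nonvanishing boundary term $-\e\int_{\p\Omega}u_\e^{ij}\nu_i\nu_j\,\phi_\nu\,dS$ coming from the double integration by parts of $\int u_\e^{ij}\phi_{ij}$, so $u_\e$ is a critical point of $J_{q,\e}$ only over compactly supported variations, not over all $\phi$ vanishing on $\p\Omega$. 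Your entire convergence analysis in (ii) hinges on comparing $J_{q,\e}(u_\e)\leq J_{q,\e}(v_{k,\e})$, and on the derivation of the uniform $L^\infty$ bound from $J_{q,\e}(u_\e)\leq J_{q,\e}(\tilde v)$, so this gap propagates. The paper instead tests the weak form $\e\int U_\e^{ij}D_{ij}w_\e\,(u_\e-\tilde u) = \int f_\e\,(u_\e-\tilde u)$ and uses the concavity of $\log\det$ together with careful bookkeeping of the boundary terms (which are controlled by $\psi$ and the Gauss curvature of $\p\Omega$) to arrive at (\ref{36eq}); this \emph{is} morally ``$J_{q,\e}(u_\e)\leq J_{q,\e}(\tilde u)$ up to a boundary correction'', but the boundary correction must be quantified.

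A second, related gap is in the derivation of the uniform $L^\infty$ bound: nowhere in your outline do you confront the fact that the Hessian $F^1_{\e,p_ip_j}$ in (\ref{D2F1}) is \emph{not} uniformly bounded in $\e$ -- it blows up like $\e^{(q-2)/2}$ when $1<q<2$ and $p$ is small, and is unbounded in $p$ when $q>2$. This is precisely what makes Theorem~\ref{RCthm} harder than the $q=2$ case treated previously, and the paper deals with it via the sign estimate (\ref{39eq}): in the bad Case~2 one shows $u_\e\leq\tilde u$ in $\Omega_0$, so that $\int_{\Omega_0}F^1_{\e,p_ip_j}D_{ij}u_\e\,(u_\e-\tilde u)\,dx\leq 0$ and the unbounded Hessian term drops out with the right sign. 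Your plan to ``absorb the indefinite term via Young's inequality'' does not engage with this obstruction. Also, for part (i), you propose to run the partial Legendre transform argument of Theorem~\ref{SBVq12} on (\ref{Abreuq}), but that equation carries an $\e$ in front of the Abreu operator and a discontinuous penalty term scaling like $1/\e$ on $\Omega\setminus\Omega_0$; the paper does not attempt to redo the PLT argument here, and instead, once the uniform $L^\infty$ and gradient bounds (\ref{uebd})--(\ref{Duebd}) are in hand, uses (\ref{D2F1refined}) to reduce to the $\e$-dependent but bounded-Hessian setting of \cite[Theorem 4.1]{LeCPAM}. Your route is not obviously unworkable, but ``minor modifications'' substantially undersells the required analysis.
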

We will prove Theorem \ref{RCthm} in Section \ref{RCsect}.
 \begin{rem}
If $F^0(x, z)$ is uniformly convex with respect to $z$, then the minimizer of \eqref{pb1} is unique.
  When $q=2$, and $\delta(\e)=0$, the equation (\ref{Abreuq}) was considered in \cite{LeCPAM, Le7}.
  Suppose $1<q<2$ and $\delta(\e)=0$ in  (\ref{Abreuq}).
Even if we obtain
positive lower and upper bound for $\det D^2 u_\e$, the best regularity we can get for $u_\e$ is $C^{2,\alpha}(\overline{\Omega})$.
This is due to the jump over $\p\Omega_0$ of the terms on the right hand side. Thus, we cannot get $W^{4, s}(\Omega)$ solutions as stated in Theorem \ref{RCthm}.
\end{rem}

{\bf Notation.} The Legendre transform of $u$ will be denoted by $u^{\ast}$ while the partial Legendre transform of $u$ will be denoted by $u^{\star}$. We use $\nu$ to denote the unit outer normal to $\p\Omega$. 

The rest of the paper is organized as follows. The Legendre transform and partial Legendre transform and their applications to the Abreu equations will be discussed in Section \ref{Legsect}. In particular, we prove Theorem \ref{SBVn3} with the Legendre transform.  The proof of Theorem \ref{SBVq12} will be given in Section \ref{q12esect}. In Section \ref{RCsect}, we will prove Theorem \ref{RCthm}.


\section{Legendre transform and partial Legendre transform}
\label{Legsect}
\subsection{Legendre transform and regularity in general dimension}
In this section, we derive the dual equation of \eqref{4-eq-g} under Legendre transform in any dimension.
After the Legendre transform,  the equation is still a linearized Monge-Amp\`ere equation. Denote the Legendre transform $u^{\ast}$ of $u$ by $$u^{\ast}(y)=x \cdot Du-u, \quad \text{where }y=Du(x)\in \Omega^\ast =Du(\Omega). $$
\begin{prop}\label{new2}
Let $u\in C^4(\Omega)$ be a uniformly convex solution to \eqref{4-eq-g} in $\Omega$  where $f$ is given by (\ref{cond1}). Then in $\Omega^*=Du(\Omega)$, its Legendre transform $u^{\ast}$ satisfies
\begin{eqnarray}\label{new-eq-leg}
 u^{\ast, ij}D_{ij} w^*= f^*.
\end{eqnarray}
Here $( u^{\ast, ij})$ is the inverse matrix of $D^2u^{\ast}$, 
\begin{equation}
\label{det-leg}
 w^*= G((\det D^2 u^{\ast})^{-1})-(\det D^2 u^{\ast})^{-1} G'((\det D^2 u^{\ast})^{-1}),
\end{equation}
and
\begin{equation}
\label{righthand-leg}
 f^*=F_{p_ip_j}u^{\ast, ij}+(F_{p_i z} y_i+ F_{p_ix_i}-F_z),
\end{equation}
where 
$F_{p_ip_j}=F_{p_ip_j}(Du^{\ast}, y \cdot Du^{\ast}-u^{\ast}, y)$ and likewise for $F_{p_i z}$, $F_{p_ix_i}$, and $F_z$.

 If $u\in C^3(\Omega)$, then instead of (\ref{new-eq-leg}), we have
$$D_j((\det D^2 u^*)u^{\ast,ij}D_{i}w^*)  =f^*\det D^2u^*$$
in the weak sense.
\end{prop}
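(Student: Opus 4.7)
The plan is a direct chain-rule computation using the Legendre map $y = Du(x)$, $x = Du^{\ast}(y)$, with the basic identities $D^2 u^{\ast}(y) = (D^2 u(x))^{-1}$ and $\det D^2 u(x) = (\det D^2 u^{\ast}(y))^{-1}$. Write $M = \det D^2 u$ and $V = G'(M)$, so that the original equation reads $L_u V = M u^{ij} V_{ij} = f$, where $u^{ij}$ denotes the inverse of $D^2 u$. The first step is to pull back the Legendre-dual operator: for any smooth $h$ and $\tilde h(y) := h(x(y))$, the chain rule together with $u^{\ast,ij} = u_{ij}(x)$ and $u^{\ast,ij} u^{\ast}_{il} = \delta_{jl}$ yields
\[
u^{\ast,ij}(y)\, D_{y_i y_j}\tilde h(y) = u^{kl}(x)\, h_{kl}(x) + h_k(x) \sum_{i,j} u_{ij}(x)\, u^{\ast}_{ijk}(y).
\]
Differentiating the identity $u_{mn}(x)\, u^{\ast}_{nj}(y) = \delta_{mj}$ in $x_l$ and contracting with $u^{lk}$ then converts the third-derivative sum into $-u^{lk}(x)\, M_l(x)/M(x)$, via the standard formula $u^{ij} u_{ijl} = (\log M)_l$.

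The second step is to apply this operator to $w^{\ast}$, which corresponds to $h(x) = G(M) - M V$. The key algebraic cancellation is
\[
h_k = G'(M) M_k - M_k V - M V_k = -M V_k,
\]
so that $h_{kl} = -M_l V_k - M V_{kl}$. Substituting into the pullback formula, the contribution $-u^{kl} M_l V_k$ from $u^{kl} h_{kl}$ cancels exactly with $+u^{lk} M_l V_k$ arising from $-(u^{lk} M_l/M)\, h_k$, leaving
\[
u^{\ast,ij} D_{y_i y_j} w^{\ast} = -M u^{kl} V_{kl} = -U^{kl} V_{kl} = -f.
\]
Finally, rewriting $-f$ using (\ref{cond1}) together with $u_i = y_i$ and $u_{ij} = u^{\ast,ij}$ (both immediate consequences of the Legendre identities) reproduces exactly the expression (\ref{righthand-leg}) for $f^{\ast}$, which proves (\ref{new-eq-leg}).

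For the $C^3$ case, pointwise second $y$-derivatives of $w^{\ast}$ are not defined since $w^{\ast}$ depends on $\det D^2 u$; the remedy is to formulate the equation in divergence form. When $u \in C^4$ the two formulations agree because the cofactor matrix is divergence-free, $D_j[(\det D^2 u^{\ast})\, u^{\ast,ij}] = 0$, so multiplying the pointwise equation by $\det D^2 u^{\ast}$ gives the stated divergence identity. When $u \in C^3$ only, the vector field $(\det D^2 u^{\ast}) u^{\ast,ij} D_i w^{\ast}$ is still continuous and the identity holds in the weak sense; this can be justified by applying the smooth-case computation to a standard mollification of $u$ and passing to the limit. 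The main technical obstacle I expect is the third-derivative identity for $u^{\ast}$ used in step one; once that is cleanly established, the cancellation in step two is essentially algebraic, following from the one-line observation $h_k = -M V_k$.
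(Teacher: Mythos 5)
Your proof is correct and takes a genuinely different route from the one the paper writes out. The paper argues variationally: it passes to the Legendre-dual functional $\mathcal{F}^\ast(u^\ast)=B^\ast(u^\ast)-A^\ast(u^\ast)$, computes the first variations of $A^\ast$ and $B^\ast$ by integration by parts, and reads off the Euler--Lagrange equation for $u^\ast$. You instead pull back the PDE directly through the diffeomorphism $y=Du(x)$, using the third-derivative identity $\sum_{i,j}u^{\ast,ij}u^\ast_{ijk}=D_{y_k}\log\det D^2u^\ast=-u^{lk}M_l/M$ (Jacobi's formula combined with $\det D^2u^\ast=1/M$) and the cancellation $h_k=G'(M)M_k-VM_k-MV_k=-MV_k$; that is the essential algebra and it all checks out, giving $u^{\ast,ij}D_{ij}w^\ast=-Mu^{kl}V_{kl}=-f=f^\ast$ after substituting $D_iu=y_i$, $D_{ij}u=u^{\ast,ij}$. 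The paper itself notes in Remark 2.2 that a direct calculation gives another proof, and yours is exactly of that kind. What the variational route buys is that the divergence-form identity for $C^3$ solutions drops out immediately after a single integration by parts, whereas your route needs a separate argument. Your mollification sketch for the $C^3$ case is the one place that would need tightening: mollifying $u$ destroys the PDE, so you must work with the $\e$-independent algebraic identity $D_j\bigl[(\det D^2u_\delta^\ast)u_\delta^{\ast,ij}D_iw^\ast_\delta\bigr]=-(\det D^2u_\delta^\ast)\,D_k\bigl(U_\delta^{kl}D_l G'(\det D^2u_\delta)\bigr)$, then identify the distributional limit of the right side with $f^\ast\det D^2u^\ast$ via the weak form of the original equation, and also account for the varying image domain $\Omega^\ast_\delta=Du_\delta(\Omega)$; alternatively, a direct change of variables in the weak form $\int U^{ij}D_jw\,D_i\varphi\,dx=-\int f\varphi\,dx$ does the job cleanly.
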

\begin{proof}
Recall that $u$ is a critical point of the functional
\begin{equation*}
  \mathcal{F}(u):=\int_\Omega F(x, u, Du)\, dx - \int_{\Omega} G (\det D^2 u) \,dx:=B(u)-A(u).
  \end{equation*}
From
$$\det D^2u=[\det D^2u^*]^{-1},\ \ dx=\det D^2u^{\ast}\,dy,$$
we have
\begin{eqnarray*}
\mathcal F(u)=\mathcal F^*( u^\ast):= B^*(u^{\ast}) -A^*(u^{\ast}),
\end{eqnarray*}
where
\begin{equation*}
A(u)=\int_{\Omega^*}\det D^2u^{\ast} G((\det D^2u^{\ast})^{-1})\,dy:=A^*(u^{\ast}),
\end{equation*}
and
\begin{eqnarray*}
B(u)&=&\int_{ \Omega^*}F(Du^{\ast}, y \cdot Du^{\ast}-u^{\ast}, y)\det D^2u^{\ast}\,dy:=B^*(u^{\ast}).
\end{eqnarray*}
Note that $u^{\ast}$ is a critical point of the dual functional $\mathcal{F}^*(u^\ast)$. To find an equation for $u^{\ast}$, we need to find the variations of $\mathcal{F}^{\ast}$.

Let $\varphi\in C^\infty_0(\Omega^*)$. Let $w^*$ be given as in \eqref{det-leg} and $(U^{\ast,ij})$ be the cofactor matrix of $D^2u^*$.  Note that $(U^{\ast, ij})$ is divergence free, that is
$$\sum_{i=1}^n D_{i} U^{\ast, ij}=0\quad \text{for all } j.$$
Then, integrating by parts twice, one finds
$$\frac{d A^*(u^*+t\varphi)}{dt}|_{t=0}=\int_{\Omega^*}U^{\ast,ij}w^*D_{ij}\varphi\,dy =\int_{\Omega^*}U^{\ast,ij}D_{ij}w^*\varphi\,dy.$$
By integration by parts, we find that
\begin{eqnarray*}
\frac{d B^*(u^*+t\varphi)}{dt}|_{t=0}&=&\int_{\Omega^*}F_{x_i}D_i \varphi \det D^2u^*\,dy+
\int_{\Omega^*}(y\cdot D\varphi-\varphi) F_z\det D^2u^*\,dy \nonumber\\[3pt]
&&+\int_{\Omega^*}FD_{ij}\varphi U^{\ast,ij}\,dy \nonumber\\[3pt]
&=&\int_{\Omega^*}F_{x_i}D_i \varphi \det D^2u^*\,dy+
\int_{\Omega^*}(y\cdot D\varphi-\varphi) F_z\det D^2u^*\,dy \nonumber\\[3pt]
&&-\int_{\Omega^*}\left[F_{x_k}D_{ki}u^* D_j\varphi U^{\ast,ij}\, + \p_{y_i}(y\cdot Du^*-u^*)F_z D_j\varphi U^{\ast,ij}\,+F_{p_i}D_j \varphi U^{\ast,ij}\right]\,dy.
\end{eqnarray*}
 Using $\p_{y_i}(y\cdot Du^*-u^*)=y_kD_{ki}u^*$ and $D_{ki}u^* U^{\ast,ij} =\det D^2 u^\ast \delta_{kj}$, we obtain
\begin{eqnarray*}
\frac{d B^*(u^*+t\varphi)}{dt}|_{t=0}
&=&
-\int_{\Omega^*}\varphi F_z\det D^2u^*\,dy-\int_{\Omega^*}F_{p_i}D_j \varphi U^{\ast,ij}\,dy\\[3pt]
&=&
-\int_{\Omega^*}\varphi F_z\det D^2u^*\,dy+\int_{\Omega^*}\varphi F_{p_ix_k} D_{kj}u^*U^{\ast,ij}\,dy\\[3pt]
&&+\int_{\Omega^*}\varphi F_{p_i z}  \p_{y_j}(y\cdot Du^*-u^*) U^{\ast,ij}\,dy +\int_{\Omega^*}\varphi F_{p_ip_j}U^{\ast,ij}\,dy\\[3pt]
&=&\int_{\Omega^*}\varphi [F_{p_ip_j}U^{\ast,ij}+(F_{p_i z} y_i+ F_{p_ix_i}-F_z)\det D^2u^* ]\,dy.
\end{eqnarray*}
Therefore
\begin{eqnarray*}
\frac{d \mathcal{F}^*(u^*+t\varphi)}{dt}|_{t=0} &=& \frac{d B^*(u^*+t\varphi)}{dt}|_{t=0} - \frac{d A^*(u^*+t\varphi)}{dt}|_{t=0}\\
&=& \int_{\Omega^*}\varphi [F_{p_ip_j}U^{\ast,ij}+(F_{p_i z} y_i+ F_{p_ix_i}-F_z)\det D^2u^* -U^{\ast,ij}D_{ij}w^*]\,dy.
\end{eqnarray*}
From $$\frac{d \mathcal{F}^*(u^*+t\varphi)}{dt}|_{t=0} =0,\quad\text{for all } \varphi\in C^\infty_0(\Omega^*),$$ we 
obtain
$$U^{\ast,ij}D_{ij}w^*= F_{p_ip_j}U^{\ast,ij}+(F_{p_i z} y_i+ F_{p_ix_i}-F_z)\det D^2u^* = f^* \det D^2 u^*$$
and this gives the desired equation for $u^{\ast}$.
 \end{proof}
 \begin{rem}
 A direct calculation as in Lemma 2.7 in \cite{Le2} gives another proof of Proposition \ref{new2}.
 \end{rem}

Using Proposition \ref{new2}, we can establish the interior higher order derivative estimates for the second boundary value problem of \eqref{Abreu}.

 \begin{proof}[Proof of Theorem \ref{SBVn3}]
 We use $C$ and $C_1$ to denote universal positive constants depending only on $\varphi, \psi, F^0,$ $n, q$ and $\Omega$.
  For $q>1$, we have from the convexity of $u$ that $$-\div (|Du|^{q-2} Du)\leq 0.$$
  Note that $F^0_z(x, z)\leq 0$.  Hence $U^{ij} D_{ij} w=-\div (|Du|^{q-2} Du) + F_z^0(x, u)\leq 0$. By the maximum principle applied to the equation $U^{ij} D_{ij} w\leq 0$, we see that $w$ attains its minimum value on the boundary. Thus $w\geq \inf_{\p\Omega}\psi>0$. This together with $\det D^2 u=w^{-1}$ gives a universal upper bound for $\det D^2 u$:
  $$\det D^2 u\leq C.$$
  Hence, from $u=\varphi$ on $\p\Omega$, we have $\sup_\Omega |u|\leq C$. Furthermore,  we can construct suitable barriers to get
  \begin{equation}
  \label{Du1}
  |Du|\leq C~\text{in } \Omega.
  \end{equation}
Let $u^{\ast} (y)$ be the Legendre transform of $u(x)$ where $y= Du(x)\in \Omega^\ast:= Du(\Omega)$. Then 
\begin{equation}
\label{uastbd}
|u^\ast|\leq C\quad\text{in }\Omega^\ast.
\end{equation}
Let $(U^{\ast, ij})$ be the cofactor matrix of $D^2 u^*$. Then, with the notation as in Proposition \ref{new2}, and $F(x, z, p)= |p|^q/q+F^0(x, z)$, we have
\begin{eqnarray*}
w^{\ast}&=&-\log \det D^2 u^\ast-1, \\ [4pt]
 f^\ast\det D^2 u^*&=&(F_{p_ip_j}u^{*,ij}-F_z)\det D^2 u^*\\[4pt]
&=&U^{*,ij}D_{ij}(|y|^q/q)-F^0_z\det D^2 u^*.
\end{eqnarray*}
  From (\ref{new-eq-leg}), we deduce that
   $u^{\ast}$ satisfies 
  \begin{eqnarray}
  \label{dual_eqn}
   D_j\left [U^{\ast, ij} D_{i}\left( |y|^q/q+ \log \det D^2u^{\ast} \right)\right] &=&U^{\ast, ij} D_{ij}\left( |y|^q/q+ \log \det D^2u^{\ast} \right)\nonumber \\
&=&F^0_z(Du^*, y\cdot Du^*-u^*)\det D^2 u^* 
  \end{eqnarray}
in $ \Omega^*$.  In view of (\ref{Du1}) and (\ref{uastbd}), we find that $F^0_z(Du^*, y\cdot Du^*-u^*)$ is universally bounded in $\Omega^\ast$. Hence, for a universally large constant $C_1>0$, we have in $\Omega^\ast$:
\begin{equation}
  \label{dual_eqn2}    D_j\left [U^{\ast, ij} D_{i}\left( |y|^q/q+ \log \det D^2u^{\ast} + C_1u^\ast\right)\right] 
=[F^0_z(Du^*, y\cdot Du^*-u^*)+ nC_1]\det D^2 u^* \geq  0.
\end{equation}

  If $y=Du(x)\in\p\Omega^*$, then $$\det D^2 u^*(y) = [\det D^2 u(x)]^{-1}= \psi(x) = \psi (Du^*(y)).$$
  This together with (\ref{Du1}) and (\ref{uastbd}) shows that on $\p \Omega^*$, $ |y|^q/q+ \log \det D^2u^{\ast} + C_1 u^\ast$ is bounded  by a universal constant. 
  We can apply the maximum principle to (\ref{dual_eqn}) to conclude that $$|y|^q/q+ \log \det D^2u^{\ast} + C_1 u^\ast \leq \sup_{\p\Omega^*}( |y|^q/q+ \log \det D^2u^{\ast} + C_1 u^\ast)\leq C\quad\text{in }\Omega^*.$$ Note that if $u\in C^{3}(\Omega)\cap C^2(\overline{\Omega})$, we apply the maximum principle for elliptic equations in divergence form (see \cite[Theorem 8.1]{GT}) to the divergence form of (\ref{dual_eqn2}). 
  
  In particular, $w(x)=\det D^2u^{\ast}(y)$ is bounded from above by a universal constant. Thus
  $\det D^2 u$ is bounded from below by a positive universal constant. In conclusion, we have
  \begin{equation}
  \label{det1}
  0<C^{-1}\leq \det D^2 u\leq C.
  \end{equation}
These bounds together with the the boundary data $\varphi$ of $u$ being $C^5({\overline{\Omega}})$ allow us to establish, from below, a universal (and positive) modulus of convexity of $u$ in the interior of $\Omega$; see \cite[Corollary 4.11 and Theorem 4.16]{F}.
  Now, we use the interior H\"older estimate for the linearized Monge-Amp\`ere equation with bounded right hand side \cite{CG, TW4}, applied to (\ref{dual_eqn}), to conclude that $\log \det D^2u^{\ast} $ is $C^{\alpha}$ in the interior of $\Omega^*$, for some $\alpha>0$ universal, with universal estimates.  This combined with the universal modulus of convexity of $u$ implies that $\det D^2 u$ is  $C^{\alpha}$ in the interior of $\Omega$.
  Therefore, from Caffarelli's $C^{2,\alpha}$ estimates for the Monge-Amp\`ere equation \cite{Ca1}, we obtain $C^{2,\alpha}$ estimates in the interior of $\Omega$ for $u$. Thus, in the interior of $\Omega$, $U^{ij} D_{ij}$ is a uniformly elliptic operator with $C^{\alpha}$ coefficients. Since $q>1$, $|Du|^{q-2} Du$ is a $C^{\beta}$ vector field in the interior of $\Omega$ for $\beta>0$ universal. Using \cite[Theorem 8.32]{GT}, we obtain from the first equation of (\ref{Abreu}), that is,
  $$ U^{ij} D_{ij} w= -\div(|Du|^{q-2} Du)+F^0_z(x, u),$$ 
the interior $C^{1,\gamma}$ estimates for $w$, where $\gamma:=\min\{\alpha, \beta\}$. This, in turns, gives the interior $C^{3,\gamma}$ estimates for $u$.
  
  When $q\geq 2$, we have better regularity estimates. In this case 
$$-\div (|Du|^{q-2}Du)=-|Du|^{q-2}\Delta u -(q-2)|Du|^{q-4} D_i u D_j u D_{ij} u$$
  is $C^{\alpha_q}$ in the interior of $\Omega$ for some $\alpha_q= \alpha_q(\alpha, q)>0$ universal.
 Now we can use the standard Schauder theory to the first equation of (\ref{Abreu}) to get the interior $C^{2,\alpha_q}$ estimates for $w$. Hence, we get the interior $C^{4,\alpha_q}$ estimates for $u$.
  \end{proof}

We also derive global smoothness estimates for the second boundary value problem of  (\ref{4-eq-g}) in terms of the $W^{2, n}(\Omega)$ norm of the solutions when $F$ is of a special form.
\begin{prop}
\label{W2nprop}
Let $\Omega\subset\R^n$ be an open, smooth, bounded and uniformly convex domain.  
Assume that $\varphi\in C^{5}(\overline{\Omega})$ and $\psi\in C^{3}(\overline{\Omega})$ with $\inf_{\p \Omega}\psi>0$.
Suppose $F(x, z, p)= F^0(x, z) + F^1(p)$ is smooth with $|D^2 F^1(p)|\leq M$ for all $p=(p_1, \cdots, p_n)\in \R^n.$ Consider a smooth solution $u$ to the second boundary value problem for (\ref{4-eq-g}) where $G(t)=\log t$. 
Assume that $ \|u\|_{W^{2, n}(\Omega)}\leq K$.
Then $u\in C^{4,\alpha}(\overline{\Omega})$ with
$$ \|u\|_{C^{4, \alpha}(\overline{\Omega})}\leq C$$
where $\alpha>0$ and $C$ depends on $F^0, F^1, K, M,\varphi,\psi, n$ and $\Omega$.
\end{prop}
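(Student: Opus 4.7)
The plan is to bootstrap from the assumed $W^{2,n}$ control up to global $C^{4,\alpha}$ regularity, with the Legendre transform playing the pivotal role of pinning down $\det D^2 u$. First I would use $\|u\|_{W^{2,n}(\Omega)}\le K$ together with Sobolev embedding to bound $\|u\|_{L^\infty(\Omega)}$, and then combine the convexity of $u$ with the smoothness of the boundary data $u=\varphi$ on $\partial\Omega$ to obtain $\|Du\|_{L^\infty(\Omega)}\le C$. Since $F(x,z,p)=F^0(x,z)+F^1(p)$, the source term in (\ref{cond1}) collapses to $f(x)=F^0_z(x,u(x))-\sum_{i,j} F^1_{p_ip_j}(Du(x))\,D_{ij}u(x)$, and the bound $|F^1_{p_ip_j}|\le M$ together with $D^2u\in L^n(\Omega)$ gives $\|f\|_{L^n(\Omega)}\le C$ depending only on the listed parameters.

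The heart of the argument is to deduce two-sided positive bounds on $\det D^2 u$, and here I would invoke the Legendre transform via Proposition \ref{new2}. Let $u^*$ be the Legendre transform of $u$ on $\Omega^*:=Du(\Omega)$; for $G(t)=\log t$ the dual equation reads $u^{*,ij}D_{ij}w^*=f^*$ in $\Omega^*$ with $w^*=-\log\det D^2 u^* - 1$. Under the change of variables $y=Du(x)$ one has $dy=\det D^2 u\,dx$ and $\det(u^{*,ij})=\det D^2u$, from which a direct computation (using $Du^*(Du(x))=x$ and $u^{*,ij}(Du(x))=D_{ij}u(x)$) yields $w^*(Du(x))=\log\det D^2u(x)-1$ and $f^*(Du(x))=-f(x)$. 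The key observation is that the Monge-Amp\`ere weight appearing in the Aleksandrov-Bakelman-Pucci integrand on $\Omega^*$ is exactly cancelled by the Jacobian:
\begin{equation*}
\int_{\Omega^*}\frac{(f^{*,\pm})^n}{\det(u^{*,ij})}\,dy=\int_{\Omega}(f^{\mp})^n\,dx.
\end{equation*}
ABP applied to the dual equation, combined with the bounded $L^n(\Omega)$ norm of $f$, the bounded diameter of $\Omega^*$ from the first paragraph, and the explicit boundary values $w^*|_{\partial\Omega^*}=-\log\psi-1\in L^\infty$, then yields $\|\log\det D^2u\|_{L^\infty(\Omega)}\le C$, so that $\det D^2 u$ lies in a compact subinterval of $(0,\infty)$.

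With $\det D^2 u$ bounded above and below by positive constants, Caffarelli's strict convexity theorem applies, and the Caffarelli-Guti\'errez $C^\alpha$ theory for the linearized Monge-Amp\`ere operator $L_u w=f$ with $f\in L^n$ (bootstrapped through $W^{2,p}$ iteration if the critical exponent needs upgrading) gives $w\in C^\alpha(\Omega)$, hence $\det D^2 u=1/w\in C^\alpha$. Caffarelli's interior $C^{2,\alpha}$ estimate then gives $u\in C^{2,\alpha}$, which in turn upgrades $f$ to $C^\alpha$, and Schauder theory applied to $L_u w=f$ produces $w\in C^{2,\alpha}$, hence $u\in C^{4,\alpha}$; Savin's boundary $C^{2,\alpha}$ estimate for the Monge-Amp\`ere equation together with boundary Schauder theory for the linearized equation promote this to the claimed global bound. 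The main obstacle is the middle paragraph: without any sign assumption on $F^1_{p_ip_j}$, a direct maximum principle for $U^{ij}D_{ij}w=f$ in $\Omega$ is unavailable, and what makes the Legendre transform decisive is that the change of variables $y=Du(x)$ cancels the Monge-Amp\`ere weight in the ABP integrand, reducing the estimate to $\|f\|_{L^n(\Omega)}$, which is precisely what the $W^{2,n}$ hypothesis supplies.
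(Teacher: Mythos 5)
Your plan is broadly parallel to the paper's, and the central Legendre-transform observation in your middle paragraph is both correct and a genuinely slicker route than the paper's for the determinant bounds. The paper obtains $\det D^2 u \geq C^{-1}$ and $\det D^2 u \leq C$ by two \emph{separate} arguments: first an Aleksandrov--Bakelman--Pucci (ABP) estimate applied to $U^{ij}D_{ij}w=f$ directly in $\Omega$, using $\det(U^{ij})=w^{-(n-1)}$ to absorb the Monge--Amp\`ere weight into a power of $\sup_\Omega w$ and close via a Young-type inequality; and second, after the Legendre transform, a direct maximum principle applied to $u^{*,ij}D_{ij}(w^*-F^1(y)+C_2u^*)\geq 0$ to get the upper bound. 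Your route --- ABP on $\Omega^*$ for $u^{*,ij}D_{ij}w^*=f^*$, exploiting that the change of variables $y=Du(x)$ makes $\det(u^{*,ij})=\det D^2 u$ cancel the Jacobian so the ABP integrand collapses to $\|f\|_{L^n(\Omega)}^n$ --- gives \emph{both} bounds in one stroke, with no bootstrap in $w$ and no need for the sign trick with $C_2 u^*$. The computations $f^*(Du(x))=-f(x)$, $w^*(Du(x))=\log\det D^2u(x)-1$, and $w^*|_{\partial\Omega^*}=-\log\psi-1$ all check out. If valid, this is arguably cleaner.

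However, there is a genuine gap in your first paragraph that breaks the order of quantifiers and introduces a circularity. You claim that convexity of $u$, boundedness of $u$, and smoothness of the boundary data $u=\varphi$ on $\partial\Omega$ already give $\|Du\|_{L^\infty(\Omega)}\leq C$. This is false: $u(x)=-\sqrt{1-|x|^2}$ on the unit ball is convex, bounded, vanishes (smoothly) on the boundary, yet $|Du|\to\infty$ at $\partial\Omega$. An upper bound on $u_\nu$ requires a barrier from below, i.e.\ an a priori \emph{upper} bound on $\det D^2 u$ (or equivalently, some additional quantitative input; the paper, following \cite{Le2}, gets there from the lower bound $\det D^2u\geq C^{-1}$ together with the $W^{2,n}$ hypothesis). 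Without the gradient bound, $\Omega^*=Du(\Omega)$ has no a priori diameter bound, and the ABP estimate you wish to invoke on $\Omega^*$ has a constant proportional to $\operatorname{diam}(\Omega^*)$ --- so the step you lean on to get the determinant bounds itself presupposes the gradient bound you were trying to obtain. The paper avoids this by running the ABP in $\Omega$ (not $\Omega^*$) \emph{first}: there the domain is the given bounded $\Omega$, the $w^{(n-1)/n}$-bootstrap closes, and $\det D^2u\geq C^{-1}$ comes out before any gradient bound is needed, after which the gradient bound and then the Legendre step are available. To repair your proposal, insert the paper's ABP-in-$\Omega$ step before the gradient estimate (your elegant ABP-on-$\Omega^*$ can then replace the paper's maximum-principle step for the upper determinant bound, or supply both bounds redundantly), and cite the barrier argument for $|Du|\leq C$ rather than attributing it to bare convexity. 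The remainder of your bootstrap to global $C^{4,\alpha}$ via Caffarelli--Guti\'errez $C^\alpha$ for $L_u$, Caffarelli/Trudinger--Wang $C^{2,\alpha}$ for the Monge--Amp\`ere equation, and Schauder matches the paper.
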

\begin{proof}
We use $C, C_1, C_2, \cdots$ to denote universal positive constants depending only on $F^0, F^1$, $K, M$, $\varphi,\psi$, $n$ and $\Omega$.
When $F(x, z, p)= F^0(x, z) + F^1(p)$, we have
\begin{equation}\label{fform} f(x) = F^0_z(x, u(x))- F^1_{p_i p_j}( Du(x)) D_{ij} u(x)
\end{equation}
and
\begin{equation}
\label{fLn}
\|f\|_{L^n(\Omega)} \leq C(n, M, K, F^0,\Omega).
\end{equation}
Note that $$\det (U^{ij}) = (\det D^2 u)^{n-1}= w^{-(n-1)}.$$ 
We apply the Aleksandrov-Bakelman-Pucci estimate (see, \cite[Theorem 9.1]{GT}) to $U^{ij}D_{ij}w=f$ in $\Omega$ with
$w=\psi$ on $\p\Omega$ to find that
\begin{equation*}\sup_{\Omega} w \leq \sup_{\partial \Omega} \psi +C(n,\Omega) \left\|\frac{f}{(\det (U^{ij}))^{1/n}}\right\|_{L^n(\Omega)}
\leq \sup_{\partial \Omega} \psi +C(n,\Omega) \left\| f\right \|_{L^n(\Omega)} \sup_{\Omega} (w^{(n-1)/n}).
\end{equation*}
It follows that 
$w\leq C$ and hence 
\begin{equation*}w\leq C,\quad \det D^2 u\geq C^{-1}>0.
\end{equation*}
Using the above estimates and arguing as in \cite[Lemma 2.5]{Le2}, we have
\begin{equation}
\label{Du2}
\sup_{\Omega}|Du|\leq C.
\end{equation}
We use the Legendre transform and notation as in Proposition \ref{new2}. Then 
$$ w^{\ast}=-\log \det D^2 u^\ast-1, \  F_{p_ip_j}u^{*,ij}=u^{*,ij}D_{ij}F^1(y) $$
and, from (\ref{righthand-leg}), we deduce that
\beq
\label{123}
 u^{\ast, ij} D_{ij}(w^{\ast}- F^1(y))=-F^{0}_z(Du^{\ast}, y \cdot Du^{\ast}-u^{\ast})\quad \text{in } \Omega^*.
\eeq
From (\ref{Du2}) and  $u^*(y) = x\cdot Du(x)-u(x) $ where $y=Du(x)$, we deduce
$$|u^*| + |F^{0}_z(Du^{\ast}, y \cdot Du^{\ast}-u^{\ast})|\leq C_1 \quad \text{in } \Omega^*.$$
Thus, for a large universal constant $C_2>0$, we have
$$u^{\ast, ij} D_{ij}(w^{\ast}- F^1(y)+ C_2 u^*) =-F^{0}(Du^{\ast}, y \cdot Du^{\ast}-u^{\ast}) + nC_2>0\quad \text{in } \Omega^*.$$
Hence, by the maximum principle, $w^{\ast}- F^1(y)+ C_2 u^*$ attains it maximum on $\p \Omega^*$. If $y=Du(x)\in \p\Omega^*$, then 
$$w^{\ast}(y)=-\log \det D^2 u^\ast(y)-1= \log \det D^2 u(x) -1 =\log \frac{1}{\psi (Du^*(y))}-1\leq C.$$
From this, we find that $$w^{\ast}\leq C \quad \text{in } \Omega^*.$$ 
Therefore, 
$\det D^2 u(x)= e^{w^*(Du(x))+1}\leq C$ in $\Omega$. This combined with the lower bound for $\det D^2 u$ gives
\begin{equation}
\label{detCC}
0<C^{-1}\leq \det D^2 u\leq C  \quad \text{in } \Omega.
\end{equation}
Now, using (\ref{detCC}) and (\ref{fLn}), we can apply the global $C^{\alpha}$ estimates for the linearized Monge-Amp\`ere equation (see, \cite[Theorem 1.4]{Le1}) to 
$$U^{ij} D_{ij} w= f  \quad \text{in } \Omega, ~\quad w=\psi  \quad \text{on } \p \Omega,$$
to get
$$\|w\|_{C^{\alpha}(\overline{\Omega})}\leq C_3$$
where $\alpha$ and $C_3$ are universal positive constants. From the global $C^{2,\alpha}$ estimates for the Monge-Amp\`ere equation  (see  \cite{TW3}) applied to
 \begin{equation}
 \label{wCC}
 \left\{
 \begin{alignedat}{2}
   \det D^2 u~&=w^{-1}~&&\text{in} ~\Omega, \\[4pt]
 u&= \varphi~&&\text{on}~ \p\Omega,
 \end{alignedat}
 \right.
\end{equation}
we find
$$\|u\|_{C^{2, \alpha}(\overline{\Omega})}\leq C_4.$$
Therefore, the second order operator $U^{ij} D_{ij}$ is uniformly elliptic with $C^{\alpha}(\overline{\Omega})$ coefficients. Moreover, from (\ref{fform}), we find that $f\in C^{\alpha}(\overline{\Omega})$. Using the classical Schauder estimates to $U^{ij} D_{ij} w= f$, we deduce that $w\in C^{2,\alpha}(\overline{\Omega})$ with 
$\|w\|_{C^{2, \alpha}(\overline{\Omega})}\leq C.$
With this estimate, (\ref{wCC}) easily gives
$$\|u\|_{C^{4, \alpha}(\overline{\Omega})}\leq C.$$
\end{proof}

\begin{rem}
The assumption $|F^1_{p_i p_j}(x, p)|\leq M$ in Proposition \ref{W2nprop} can be removed if one has $\|u\|_{C^{0,1}(\Omega)}+\|u\|_{W^{2, n}(\Omega)}\leq K$ or $\|u\|_{W^{2, n+\e}(\Omega)}\leq K$ for some $\e>0$.
\end{rem}

From (\ref{123}), we also obtain the following interior estimates.
 
\begin{prop}\label{int-est-high}
Let $\Omega\subset\mathbb R^n$ be a convex domain. Let $F^0: \R^n\times \R\rightarrow\R$ be a smooth function. Let
$u\in C^3(\Omega)$ be a uniformly convex  solution to
\beq\label{121}
\sum_{i, j=1}^{n}U^{ij}D_{ij}[(\det D^2 u)^{-1}] =-\div (|Du|^{q-2} Du)+F^0_z(x,u)\quad\text{in }\Omega
\eeq
that satisfies 
\beq\label{detcond}
0<\lambda<\det D^2u\leq \Lambda.
\eeq 
Then there exists a constant $\alpha\in (0, 1)$ depending only on $\lambda,\Lambda, n$ and $q$ with the following property: For any $\Omega'\Subset\Omega$, there exists a constant $C>0$ depending on $\sup_\Omega|u|$,  the modulus of convexity of $u$, $\lambda$, $\Lambda$, $n$, $q$, $F^0$ and $dist(\Omega',\partial\Omega)$, such that 
$$\|u\|_{C^{4, \alpha}(\Omega')}\leq C~\quad\text{if } q\geq 2$$
and
$$\|u\|_{C^{3, \alpha}(\Omega')}\leq C~\quad\text{if } 1<q< 2.$$
\end{prop}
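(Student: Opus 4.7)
\medskip

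\textbf{Proof plan.} The statement is essentially the second half of the proof of Theorem~\ref{SBVn3}, only now the two-sided bound on $\det D^2 u$ is a hypothesis rather than something to be derived, and everything is to be localized. The plan is therefore to pass to the Legendre transform, invoke Caffarelli--Guti\'errez interior H\"older theory, and then bootstrap.

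First I would introduce $u^{\ast}$, the Legendre transform of $u$, on $\Omega^{\ast}=Du(\Omega)$. With the Lagrangian $F(x,z,p)=|p|^{q}/q+F^{0}(x,z)$ one has $F_{p_{i}z}=F_{p_{i}x_{i}}=0$ and $F_{p_{i}p_{j}}u^{\ast,ij}=u^{\ast,ij}D_{ij}(|y|^{q}/q)$, so Proposition~\ref{new2} gives the dual identity
\[
U^{\ast,ij}D_{ij}\!\left(\tfrac{|y|^{q}}{q}+\log\det D^{2}u^{\ast}\right)=F^{0}_{z}(Du^{\ast},\,y\!\cdot\! Du^{\ast}-u^{\ast})\,\det D^{2}u^{\ast}
\]
on $\Omega^{\ast}$, with $U^{\ast,ij}$ divergence-free. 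Hypothesis (\ref{detcond}) says $\det D^{2}u^{\ast}\in[\Lambda^{-1},\lambda^{-1}]$, and the bound on $\sup_{\Omega}|u|$ together with $|Du|\le\operatorname{diam}(\Omega^\ast)$ (known from the image of the gradient map) and $|u^{\ast}|\le C$ on compact subsets of $\Omega^\ast$ makes the right-hand side bounded on every compact subset of $\Omega^{\ast}$. (If working interior-to-$\Omega$, one restricts to $Du(\Omega')$ which lies in a compact subset of $\Omega^\ast$ thanks to Caffarelli's interior $C^{1,\alpha}$ estimates coming from (\ref{detcond}) and the modulus-of-convexity assumption.)

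Second, (\ref{detcond}) together with the modulus-of-convexity hypothesis and $\sup_{\Omega}|u|\le C$ permits the application of Caffarelli--Guti\'errez's interior H\"older estimate for the linearized Monge-Amp\`ere equation with bounded right-hand side (\cite{CG,TW4}) to the dual equation above. This yields
\[
\bigl\|\log\det D^{2}u^{\ast}\bigr\|_{C^{\alpha}(K^{\ast})}\le C
\]
for every compact $K^{\ast}\subset\Omega^{\ast}$, some universal $\alpha\in(0,1)$. Translating back, $\det D^{2}u\in C^{\alpha}(K)$ for every $K\Subset\Omega$, whence by Caffarelli's interior $C^{2,\alpha}$ theory for the Monge-Amp\`ere equation \cite{Ca1} we obtain $u\in C^{2,\alpha}_{\mathrm{loc}}(\Omega)$ with interior estimates. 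In particular $L_{u}=U^{ij}D_{ij}$ is a uniformly elliptic operator in $\Omega'\Subset\Omega$ with $C^{\alpha}$ coefficients.

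Finally I bootstrap using the first equation of (\ref{121}), written as $U^{ij}D_{ij}w=-\operatorname{div}(|Du|^{q-2}Du)+F^{0}_{z}(x,u)$ with $w=(\det D^{2}u)^{-1}$. For $1<q<2$ the field $|Du|^{q-2}Du$ is only $C^{q-1}$ on compact sets (because $Du\in C^{\alpha}_{\mathrm{loc}}$), so rewriting the equation in divergence form $D_{j}(U^{ij}D_{i}w)=-\operatorname{div}(|Du|^{q-2}Du)+F^{0}_{z}$, I would invoke \cite[Theorem~8.32]{GT} to obtain $w\in C^{1,\gamma}_{\mathrm{loc}}(\Omega)$ with $\gamma=\min\{\alpha,q-1\}$; combined with Caffarelli's $C^{2,\alpha}$ theory applied to $\det D^{2}u=w^{-1}$ upgraded to the $C^{3,\gamma}$ setting, this gives $u\in C^{3,\gamma}_{\mathrm{loc}}$. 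For $q\ge 2$, the expansion
\[
-\operatorname{div}(|Du|^{q-2}Du)=-|Du|^{q-2}\Delta u-(q-2)|Du|^{q-4}D_{i}u\,D_{j}u\,D_{ij}u
\]
is $C^{\alpha_q}_{\mathrm{loc}}$ for some $\alpha_{q}=\alpha_{q}(\alpha,q)\in(0,1)$ (only the $q=2$ case is trivial; for general $q\ge 2$ one uses $|Du|^{q-2},|Du|^{q-4}|Du|^{2}\in C^{\alpha}_{\mathrm{loc}}$ since $Du$ is $C^{\alpha}$-bounded away from having any singularity in these powers), so classical Schauder theory gives $w\in C^{2,\alpha_{q}}_{\mathrm{loc}}$ and hence $u\in C^{4,\alpha_{q}}_{\mathrm{loc}}$ by Schauder applied to the Monge-Amp\`ere equation again.

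The main obstacles are book-keeping: verifying that the strict-convexity hypothesis plus (\ref{detcond}) really yield a quantitative interior modulus of convexity sufficient to apply the Caffarelli--Guti\'errez theory on arbitrary $\Omega'\Subset\Omega$ (so the dependence of $C$ is as stated), and handling the case $1<q<2$ carefully, where the bootstrap must stop at $C^{3,\gamma}$ because $|Du|^{q-2}Du$ is merely H\"older, not $C^{1}$; the Schauder step used for $q\ge 2$ fails here and one must use the divergence-form regularity of \cite[Theorem~8.32]{GT} instead.
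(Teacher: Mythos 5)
Your proposal is correct and coincides with what the paper intends: the paper gives no explicit proof of this proposition (it is announced with ``From (\ref{123}), we also obtain the following interior estimates''), and your argument is a faithful reconstruction of the one sketched in the proofs of Theorem~\ref{SBVn3} and Proposition~\ref{W2nprop} — dualize via the Legendre transform to get equation (\ref{123}) with bounded right-hand side, apply Caffarelli--Guti\'errez interior H\"older theory to obtain $\det D^2 u\in C^\alpha_{\rm loc}$ and hence $u\in C^{2,\alpha}_{\rm loc}$, then bootstrap through the linearized equation, using \cite[Theorem 8.32]{GT} (divergence form) when $1<q<2$ and Schauder when $q\ge 2$. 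One small imprecision: for $1<q<2$ you state $|Du|^{q-2}Du\in C^{q-1}$ ``because $Du\in C^\alpha_{\rm loc}$''; composing a $(q-1)$-H\"older map with a merely $C^\alpha$ gradient would only give exponent $\alpha(q-1)$ — the exponent $q-1$ requires the local Lipschitz bound on $Du$ that comes from $u\in C^{2,\alpha}_{\rm loc}$, which you have at that stage, so the conclusion stands but the parenthetical reason should be corrected.
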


\begin{rem} 
In dimension two, \eqref{detcond} implies a positive lower bound on the modulus of convexity of $u$; see, for example \cite[Lemma 2.5]{Li}.
\end{rem}

\subsection{Partial Legendre transform in two dimensions}

In this section, we consider $n=2$ and write $u(x)=u(x_1,x_2)$.
The partial Legendre transform in the $x_1$-variable is 
\beq\label{p-leg}
u^\star(\xi, \eta)=x_1u_{x_1}(x_1, x_2)-u(x_1,x_2),
\eeq
where $$y=(\xi, \eta)=\mathcal P(x_1, x_2):=(u_{x_1}, x_2)\in \mathcal{P}(\Omega):=\Omega^\star.$$
We have
$$
\frac{\partial(\xi,\eta)}{\partial(x_1,x_2)}=
\begin{pmatrix}
u_{x_1 x_1}  &\ \ u_{x_1 x_2}\\[4pt]
 0 &\ \ 1\\
\end{pmatrix}, \quad\text{and } 
\
 \frac{\partial(x_1,x_2)}{\partial(\xi,\eta)}=
\begin{pmatrix}
\frac{1}{u_{x_1 x_1}}  &\ \  -\frac{u_{x_1 x_2}}{u_{x_1 x_1}}\\[4pt]
0 &\ \ 1\\
\end{pmatrix}.
$$
Hence, 
$$u^\star_\xi=x_1,\ u^\star_{\xi\xi}=\frac{1}{u_{x_1 x_1}},\  u^\star_\eta=-u_{x_2},\ u^\star_{\eta\eta}=-\frac{\det D^2u}{u_{x_1 x_1}},\ u^\star_{\xi\eta}=-\frac{u_{x_1 x_2}}{u_{x_1 x_1}}.$$
In the following proposition, we deduce the dual equation for \eqref{4-eq-g} under partial Legendre transform. One can derive the dual equation for the general case of $F(x,z, p)$ and $G$, but for simplicity we only consider a special case for
 \eqref{4-eq-g} which is appropriate for the proof of Theorem \ref{SBVq12}. This is the case of equation (\ref{SBVq12e}) in Section \ref{q12esect}. As we explained in the introduction, when $1<q<2$, we can not expect $C^4$ solution to \eqref{Abreu4}.
\begin{prop}\label{new-eq}
Let $G(t)=\log t$ and $F(x, z, p) = (|p|^2 +\delta)^{\frac{q}{2}}/q + F^0(x, z)$, where $\delta\geq 0$ and $q>1$.
Let $u\in C^4(\Omega)$ be a uniformly convex solution to \eqref{4-eq-g} in $\Omega$  where $f$ is given by (\ref{cond1}). Then in $\Omega^\star=\mathcal P(\Omega)$, its partial Legendre transform $u^\star$ satisfies
\begin{eqnarray}
\label{eq-new2}
w^\star w^\star_{\xi\xi}+w^\star_{\eta\eta}-{w^\star_\xi}^2-\frac{2}{w^\star}{w^\star_\eta}^2={w^\star}^2f^\star. 
\end{eqnarray}
Here $w^\star=-\frac{u^\star_{\eta\eta}}{u^\star_{\xi\xi}}$ and 
\beq\label{righthand}
f^\star=
(\xi^2+{u^\star_\eta }^2+\delta)^{\frac{q}{2}-1}(1+{u^\star_{\xi\eta}}^2-  u^\star_{\eta\eta}u^\star_{\xi\xi})\nonumber\\
+(q-2)(\xi^2+{u^\star_\eta }^2+\delta)^{\frac{q}{2}-2}[(\xi+u^\star_\eta u^\star_{\xi\eta})^2-{u^\star_\eta}^2u^\star_{\xi\xi}u^\star_{\eta\eta}]-F^0_zu^\star_{\xi\xi},
\eeq
where $F^0_z=F^0_z(u^\star_\xi, \eta,\xi u^\star_\xi-u^\star)$.
\end{prop}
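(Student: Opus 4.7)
The plan is to establish (\ref{eq-new2}) by pulling the equation (\ref{4-eq-g}) back to $(\xi,\eta)$-coordinates pointwise, rather than functionally—the partial Legendre transform does not admit a clean variational formulation analogous to the full Legendre transform used in Proposition~\ref{new2}. The two sides must be converted separately: the differential operator $U^{ij}D_{ij}[1/\det D^2u]$ on the left and the explicit expression $f$ from (\ref{cond1}) on the right. The inputs for both are the inversion formulas implicit in the table preceding the statement: $\det D^2u=-u^\star_{\eta\eta}/u^\star_{\xi\xi}=w^\star$, $u_{x_1x_1}=1/u^\star_{\xi\xi}$, $u_{x_1x_2}=-u^\star_{\xi\eta}/u^\star_{\xi\xi}$, and consequently $u_{x_2x_2}=-u^\star_{\eta\eta}+(u^\star_{\xi\eta})^2/u^\star_{\xi\xi}$, together with the two-dimensional cofactor identities $U^{11}=u_{x_2x_2}$, $U^{22}=u_{x_1x_1}$, $U^{12}=-u_{x_1x_2}$.

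For the left-hand side I would iterate the chain rules $\partial_{x_1}=u_{x_1x_1}\partial_\xi$ and $\partial_{x_2}=u_{x_1x_2}\partial_\xi+\partial_\eta$ to expand each of $w_{x_1x_1}$, $w_{x_1x_2}$, $w_{x_2x_2}$ (with $w=1/\det D^2u=1/w^\star$) as a combination of the $\xi$- and $\eta$-derivatives of $1/w^\star$, with coefficients built from the $u$-derivatives. Substituting into
\[
U^{ij}D_{ij}w=u_{x_2x_2}w_{x_1x_1}-2u_{x_1x_2}w_{x_1x_2}+u_{x_1x_1}w_{x_2x_2}
\]
and grouping by derivatives of $1/w^\star$, one checks that the coefficient of $(1/w^\star)_{\xi\eta}$ cancels, the coefficient of $(1/w^\star)_{\xi\xi}$ telescopes to $u_{x_1x_1}\det D^2u=u_{x_1x_1}w^\star$, and the third-order $u$-derivatives that a priori multiply $(1/w^\star)_\xi$ assemble into $\partial_{x_1}(\det D^2u)=u_{x_1x_1}w^\star_\xi$ (the last equality via the chain rule applied to $\det D^2u=w^\star$). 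Consequently
\[
U^{ij}D_{ij}w=u_{x_1x_1}\bigl[w^\star_\xi(1/w^\star)_\xi+w^\star(1/w^\star)_{\xi\xi}+(1/w^\star)_{\eta\eta}\bigr],
\]
and expanding the $(1/w^\star)$-derivatives and multiplying through by $(w^\star)^2$ yields
\[
(w^\star)^2\,U^{ij}D_{ij}w=-\frac{1}{u^\star_{\xi\xi}}\Bigl[w^\star w^\star_{\xi\xi}+w^\star_{\eta\eta}-(w^\star_\xi)^2-\tfrac{2}{w^\star}(w^\star_\eta)^2\Bigr].
\]

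For the right-hand side I would use the specific structure of $F$: the mixed partials $F_{p_ix_i}$ and $F_{p_iz}$ vanish, and $F_{p_ip_j}=(|p|^2+\delta)^{q/2-1}\delta_{ij}+(q-2)(|p|^2+\delta)^{q/2-2}p_ip_j$. Plugging in $p=(u_{x_1},u_{x_2})=(\xi,-u^\star_\eta)$ and the expressions for $D_{ij}u$ from the first paragraph gives explicit formulas for $\Delta u$ and $p_ip_jD_{ij}u$ in terms of $u^\star$-derivatives, and a direct check identifies $-u^\star_{\xi\xi}f$ term by term with $f^\star$ as defined in (\ref{righthand}). Multiplying $U^{ij}D_{ij}w=f$ by $-u^\star_{\xi\xi}(w^\star)^2$ and comparing with the simplified left-hand side then produces (\ref{eq-new2}).

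The main obstacle is the bookkeeping in the left-hand side computation—verifying that the transient $u_{x_ix_jx_k}$ terms generated by the iterated chain rule really assemble into $\partial_{x_1}(\det D^2u)$, and that the cross $(1/w^\star)_{\xi\eta}$ coefficient cancels. This is the partial-Legendre counterpart of the divergence-free property of the cofactor matrix, and once these cancellations are identified the remainder is elementary (if lengthy) algebra.
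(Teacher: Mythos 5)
Your proposal is correct, and it takes a genuinely different route from the paper. Where you pull the PDE back pointwise via iterated chain rules, the paper computes variations of the dual functional $\mathcal{F}^\star(u^\star)=\calF^{1,\star}(u^\star)-A^\star(u^\star)+\calF^{0,\star}(u^\star)$ obtained by changing variables $(x_1,x_2)\mapsto(\xi,\eta)$ in $\mathcal F$, and reads off (\ref{eq-new2}) as the Euler--Lagrange equation of $\mathcal F^\star$—the partial Legendre transform thus \emph{does} admit a clean variational formulation, contrary to the remark in your first paragraph. The two routes buy slightly different things: the paper's variational argument never needs third derivatives of $u$ explicitly (they are absorbed by integration by parts), which is why its $A^\star$-variation yields the tidy form $-[(w^\star)^{-1}]_{\eta\eta}+(\log w^\star)_{\xi\xi}$ directly; your direct computation, on the other hand, exposes the structural identity
\[
U^{ij}D_{ij}v \;=\; u_{x_1x_1}\bigl[w^\star\,v^\star_{\xi\xi}+v^\star_{\eta\eta}+w^\star_\xi\,v^\star_\xi\bigr]
\]
for any $v$ with pullback $v^\star$, which is how the linearized Monge--Amp\`ere operator transforms under the partial Legendre map and is of independent use. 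The cancellations you flag at the end are exactly right: the $v^\star_{\xi\eta}$ coefficient is $-2u_{x_1x_2}u_{x_1x_1}+2u_{x_1x_1}u_{x_1x_2}=0$, and the $v^\star_\xi$ coefficient $u_{x_2x_2}u_{x_1x_1x_1}-2u_{x_1x_2}u_{x_1x_1x_2}+u_{x_1x_1}u_{x_1x_2x_2}$ is precisely $\partial_{x_1}(\det D^2u)=u_{x_1x_1}w^\star_\xi$. Specializing to $v=w=1/w^\star$, and the right-hand-side identifications $u^\star_{\xi\xi}\,\Delta u=1+(u^\star_{\xi\eta})^2-u^\star_{\xi\xi}u^\star_{\eta\eta}$ and $u^\star_{\xi\xi}\,p_ip_jD_{ij}u=(\xi+u^\star_\eta u^\star_{\xi\eta})^2-(u^\star_\eta)^2u^\star_{\xi\xi}u^\star_{\eta\eta}$ (with $p=(\xi,-u^\star_\eta)$), confirm $f^\star=-u^\star_{\xi\xi}f$, and multiplying the original equation by $-u^\star_{\xi\xi}(w^\star)^2$ gives (\ref{eq-new2}). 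So the argument goes through.
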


\begin{proof}
Recall that $u$ is a critical point of the functional
\begin{eqnarray*}
  \mathcal{F}(u)&:=&\int_{\Omega} \frac{(|Du|^2 +\delta)^{\frac{q}{2}}}{q}\,dx - \int_{\Omega}\log \det D^2 u \,dx+\int_\Omega F^0(x,u)\,dx\\
&:=&\calF^1(u)-A(u)+\calF^0(u).
  \end{eqnarray*}
From
$$\det D^2u=-\frac{u^\star_{\eta\eta}}{u^\star_{\xi\xi}},\ \ dx_1 dx_2=u^\star_{\xi\xi}\,d\xi d\eta,$$
we have
\begin{eqnarray*}
\calF^1(u)&=&\int_{\Omega^\star} \frac{(\xi^2+{u^\star_\eta}^2+\delta)^{\frac{q}{2}}}{q}u^\star_{\xi\xi}\,d\xi d\eta:=\calF^{1,\star}(u^\star), \\[4pt]
A(u)
&=&\int_{\Omega^\star}\log\left(-\frac{u^\star_{\eta\eta}}{u^\star_{\xi\xi}}\right) u^\star_{\xi\xi}\,d\xi d\eta:=A^\star(u^\star),\\[4pt]
\calF^0(u)&=&\int_\Omega F^0(x_1, x_2, u)\, dx_1 dx_2 \\
&=&\int_{\Omega^\star}F^0(u^\star_\xi, \eta,\xi u^\star_\xi-u^\star)u^\star_{\xi\xi}\,d\xi d\eta:=\calF^{0, \star}(u^\star).
\end{eqnarray*}
Note that $u^{\star}$ is a critical point of the dual functional $$\calF^\star(u^\star)=\calF^{1,\star}(u^\star)-A^\star(u^\star)+ \calF^{0, \star}(u^\star).$$ To find an equation for $u^{\star}$, we need to find the variations of $\calF^{\star}$.

Let $\varphi\in C^\infty_0(\Omega^\star)$. Using integration by parts, we obtain
\begin{eqnarray*}
\frac{d \calF^{1, \star}(u^\star+t\varphi)}{dt}|_{t=0}&=&\int_{\Omega^\star}\frac{(\xi^2+{u^\star_\eta}^2+\delta)^{\frac{q}{2}}}{q}\varphi_{\xi\xi}\,d\xi d\eta+\int_{\Omega^\star} (\xi^2+{u^\star_\eta}^2+\delta)^{\frac{q}{2}-1}u^\star_\eta \varphi_\eta u^\star_{\xi\xi}\,d\xi d\eta\\
&=&-\int_{\Omega^\star}(\xi^2+{u^\star_\eta}^2+\delta)^{\frac{q}{2}-1}(\xi+u^\star_\eta u^\star_{\xi\eta})\varphi_{\xi}\,d\xi d\eta\\&&+\int_{\Omega^\star} (\xi^2+{u^\star_\eta}^2+\delta)^{\frac{q}{2}-1}u^\star_\eta \varphi_\eta u^\star_{\xi\xi}\,d\xi d\eta\\
&=&\int_{\Omega^\star}[(\xi^2+{u^\star_\eta}^2+\delta)^{\frac{q}{2}-1}(\xi+u^\star_\eta u^\star_{\xi\eta})]_{\xi}\varphi\,d\xi d\eta\\&&-\int_{\Omega^\star} [(\xi^2+{u^\star_\eta}^2 +\delta)^{\frac{q}{2}-1}u^\star_\eta  u^\star_{\xi\xi}]_\eta\varphi\,d\xi d\eta.
\end{eqnarray*}

\begin{eqnarray*}
\frac{d A^\star(u^\star+t\varphi)}{dt}|_{t=0}&=&\int_{\Omega^\star}-\frac{u^\star_{\xi\xi}}{u^\star_{\eta\eta}} \left(-\frac{\varphi_{\eta\eta}u^\star_{\xi\xi}-u^\star_{\eta\eta}\varphi_{\xi\xi}}{{u^\star_{\xi\xi}}^2}\right)u^\star_{\xi\xi}+\log\left(-\frac{u^\star_{\eta\eta}}{u^\star_{\xi\xi}}\right) \varphi_{\xi\xi}\,d\xi d\eta\\[3pt]
&=&\int_{\Omega^\star} \frac{\varphi_{\eta\eta}u^\star_{\xi\xi}-u^\star_{\eta\eta}\varphi_{\xi\xi}}{u^\star_{\eta\eta}}+\log\left(-\frac{u^\star_{\eta\eta}}{u^\star_{\xi\xi}}\right) \varphi_{\xi\xi}\,d\xi d\eta\\
&=& \int_{\Omega^\star} -(w^\star)^{-1} \varphi_{\eta\eta} -\varphi_{\xi\xi} +\log w^{\star} \varphi_{\xi\xi}\,d\xi d\eta\\
&=&  \int_{\Omega^\star} \left(-[(w^\star)^{-1}]_{\eta\eta}+ (\log w^\star)_{\xi\xi}\right)\varphi \,d\xi d\eta.
\end{eqnarray*}
\begin{eqnarray*}
\frac{d \calF^{0, \star}(u^\star+t\varphi)}{dt}|_{t=0}&=&
\int_{\Omega^\star}(\xi\varphi_\xi-\varphi) F^0_z u^\star_{\xi\xi}\,d\xi d\eta+\int_{\Omega^\star}F^0_{x_1}\varphi_\xi u^\star_{\xi\xi}\,d\xi d\eta+\int_{\Omega^\star}F^0\varphi_{\xi\xi}\,d\xi d\eta\nonumber\\[3pt]
&=&
\int_{\Omega^\star}(\xi\varphi_\xi-\varphi) F^0_z u^\star_{\xi\xi}\,d\xi d\eta-\int_{\Omega^\star}(\xi u^\star_\xi-u^\star)_\xi F^0_z\varphi_{\xi}\,d\xi d\eta\nonumber\\[3pt]
&=&-\int_{\Omega^\star}F^0_zu^\star_{\xi\xi}\varphi\,d\xi d\eta\nonumber.
\end{eqnarray*}
Therefore 
\begin{eqnarray*}
\frac{d \calF^\star(u^\star+t\varphi)}{dt}|_{t=0}&=&\frac{d \calF^{1, \star}(u^\star+t\varphi)}{dt}|_{t=0} -\frac{d A^\star(u^\star+t\varphi)}{dt}|_{t=0} + \frac{d \calF^{0, \star}(u^\star+t\varphi)}{dt}|_{t=0}\\
&=& \int_{\Omega^\star}\left([(\xi^2+{u^\star_\eta}^2+\delta)^{\frac{q}{2}-1}(\xi+u^\star_\eta u^\star_{\xi\eta})]_{\xi}- [(\xi^2+{u^\star_\eta}^2+\delta)^{\frac{q}{2}-1}u^\star_\eta  u^\star_{\xi\xi}]_\eta\right)\varphi\,d\xi d\eta\\
&&-  \int_{\Omega^\star} \left(-[(w^\star)^{-1}]_{\eta\eta}+ (\log w^\star)_{\xi\xi}+ F^0_zu^\star_{\xi\xi}\right)\varphi \,d\xi d\eta.
\end{eqnarray*}
From $$\frac{d \mathcal{F}^\star(u^\star+t\varphi)}{dt}|_{t=0} =0,\quad\text{for all } \varphi\in C^\infty_0(\Omega^\star),$$ we 
find that, after the partial Legendre transformation, equation (\ref{4-eq-g}) becomes
\begin{eqnarray*}
-[(w^\star)^{-1}]_{\eta\eta}+ (\log w^\star)_{\xi\xi}&=&
(\xi^2+{u^\star_\eta}^2+\delta)^{\frac{q}{2}-1}(1+{u^\star_{\xi\eta}}^2-  u^\star_{\eta\eta}u^\star_{\xi\xi})\nonumber\\
&&+(q-2)(\xi^2+{u^\star_\eta}^2+\delta)^{\frac{q}{2}-2}[(\xi+u^\star_\eta u^\star_{\xi\eta})^2-{u^\star_\eta}^2u^\star_{\xi\xi}u^\star_{\eta\eta}]-F^0_z u^\star_{\xi\xi}.
\end{eqnarray*}
After simplifications, it becomes
\begin{eqnarray}
w^\star w^\star_{\xi\xi}+w^\star_{\eta\eta}-{w^\star_\xi}^2-\frac{2}{w^\star}{w^\star_\eta}^2&=&{w^\star}^2
\cdot\{ (\xi^2+{u^\star_\eta}^2+\delta)^{\frac{q}{2}-1}(1+{u^\star_{\xi\eta}}^2-  u^\star_{\eta\eta}u^\star_{\xi\xi})\nonumber\\
&&+(q-2)(\xi^2+{u^\star_\eta}^2+\delta)^{\frac{q}{2}-2}[(\xi+u^\star_\eta u^\star_{\xi\eta})^2-{u^\star_\eta}^2u^\star_{\xi\xi}u^\star_{\eta\eta}]-F^0_z u^\star_{\xi\xi}\}.
\end{eqnarray}
Therefore, (\ref{eq-new2}) is proved.
\end{proof}


\section{Proof of Theorem \ref{SBVq12}}
\label{q12esect}
\begin{proof}[Proof of Theorem \ref{SBVq12}]
Theorem \ref{SBVq12} follows from Theorem \ref{SBVq12e} by letting $\e\rightarrow 0$.
\end{proof}
\begin{thm}
\label{SBVq12e}
Let $\Omega\subset\R^2$ be an open, smooth, bounded and uniformly convex domain.  Let $q>1$. Assume (\ref{Qcond}) holds.
Assume that $\varphi\in C^{5}(\overline{\Omega})$ and $\psi\in C^{3}(\overline{\Omega})$ with $\inf_{\p \Omega}\psi>0$. Let $\delta(\e)=\e$.
Consider  the following
second boundary value problem:
\begin{equation}
\label{Abreu4e}
  \left\{ 
  \begin{alignedat}{2}\sum_{i, j=1}^{2}U^{ij}D_{ij}w~& =-div ((|Du|^2 +\delta(\e))^{\frac{q-2}{2}} Du) + F^0_z(x, u)~&&\text{in} ~\Omega, \\\
 w~&= (\det D^2 u)^{-1}~&&\text{in}~ \Omega,\\\
u ~&=\varphi~&&\text{on}~\p \Omega,\\\
w ~&= \psi~&&\text{on}~\p \Omega.
\end{alignedat}
\right.
\end{equation}
Then, the following facts hold:
\begin{myindentpar}{1cm}
(a) There exists a  uniformly convex solution $u_\e\in C^{4, \gamma}(\overline{\Omega})$ to (\ref{Abreu4e}) for all $\gamma\in (0,1)$. 
\\
(b) If $q\geq 2$, then 
$$\|u_\e\|_{C^{4,\beta}(\overline{\Omega})}\leq C$$
for some $\beta\in (0, 1)$ and $C>0$ depending on $q,\Omega$, $\omega$, $F^0$, $\varphi$ and $\psi$. \\
(c) If $1<q<2$, then
$$\|u_\e\|_{C^{3,\beta}(\overline{\Omega})}\leq C$$
for some $\beta\in (0, 1)$ and $C>0$ depending on $q,\Omega$, $\omega$, $F^0$, $\varphi$ and $\psi$.
\end{myindentpar}
\end{thm}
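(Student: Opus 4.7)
The plan is to prove Theorem \ref{SBVq12e} in two logically distinct stages: existence of a $C^{4,\gamma}(\bom)$ solution for each fixed $\e>0$, and the $\e$-uniform a priori estimates of parts (b), (c). For the first stage, the key point is that $\delta(\e)=\e>0$ renders $F^{1}(p)=(|p|^{2}+\e)^{q/2}/q$ smooth with $|F^{1}_{p_ip_j}|$ bounded on any bounded set of $p$, so Proposition \ref{W2nprop} (and its extension in the remark following it) applies at fixed $\e$ and upgrades any $W^{2,s}$ bound to a full $C^{4,\gamma}(\bom)$ bound. This is enough to run Leray-Schauder degree theory along a homotopy deforming \eqref{Abreu4e} to the classical Abreu problem with $F^0\equiv 0$ and no $q$-Laplacian term (whose solvability is known, cf.\ \cite{LeCPAM}), giving (a). The real content is the $\e$-uniform estimates, on which my plan focuses.

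The $L^\infty$ bounds on $u_\e$ and $Du_\e$ follow, uniformly in $\e$, from convexity of $u_\e$, the boundary condition $u_\e=\varphi\in C^{5}(\bom)$, the uniform convexity of $\Omega$, and standard barrier constructions. The crux of the proof is the $\e$-uniform two-sided bound
\[
0<C^{-1}\leq \det D^2 u_\e\leq C\quad\text{in }\Omega.
\]
The obstruction to extracting this bound by the methods of \cite{LeCPAM} is that the argument there requires $\div(|Du|^{q-2}Du)\leq C\Delta u$, which holds only for $q\geq 2$. To bypass this restriction I would use the partial Legendre transform of Proposition \ref{new-eq}.

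Since for each $\e>0$ the $C^{4,\gamma}(\bom)$ solution obtained in the first stage yields $u_\e^\star\in C^{2}(\overline{\Omega^\star})$, the quantity $w_\e^\star=-u^\star_{\e,\eta\eta}/u^\star_{\e,\xi\xi}=\det D^2 u_\e$ satisfies the quasilinear non-divergence elliptic equation \eqref{eq-new2} in the classical pointwise sense. On $\p\Omega^\star$ we have $w_\e^\star=\det D^2 u_\e=1/\psi$, bounded uniformly in $\e$ above and below. Inside $\Omega^\star$, the right hand side ${w_\e^\star}^{2}f_\e^\star$ in \eqref{eq-new2} is expressible in terms of $\xi,\eta,u_\e^\star,Du_\e^\star,D^2u_\e^\star$ only, and depends on $\e$ through the factor $(\xi^{2}+(u_{\e,\eta}^\star)^{2}+\e)^{(q-2)/2}$ which is uniformly bounded once $|Du_\e|$ is controlled; by hypothesis (\ref{Qcond}) and the $L^\infty$ bound on $u_\e$, the $F^0_z$ term is uniformly controlled as well. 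I would then apply the classical maximum principle separately to suitably modified quantities of the form $w_\e^\star+Mu_\e^\star$ and $(w_\e^\star)^{-1}+Mu_\e^\star$, with $M$ a large universal constant chosen to absorb the sign-indefinite first-order and $F^0_z$ terms — in the spirit of the Legendre-transform barrier already used in the proofs of Theorem \ref{SBVn3} and Proposition \ref{W2nprop}. This forces the extrema of the modified quantities to occur on $\p\Omega^\star$ and delivers both the upper and lower bounds on $w_\e^\star=\det D^2 u_\e$ uniformly in $\e$, for the full range $q>1$.

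With $\det D^2 u_\e$ bounded away from $0$ and $\infty$ in hand, the remainder of the argument is standard. The bounds give a uniform modulus of convexity of $u_\e$ by the remark following Proposition \ref{int-est-high}; Caffarelli's global $C^{2,\alpha}$ theory then provides $\|u_\e\|_{C^{2,\alpha}(\bom)}\leq C$; and the two-dimensional global H\"older regularity for the linearized Monge-Amp\`ere equation with right hand side in divergence form of a bounded vector field, established in \cite{Le3,Le4}, applied to the divergence form of the first equation of \eqref{Abreu4e}, gives $\|w_\e\|_{C^{\alpha}(\bom)}\leq C$. For $q\geq 2$ the vector field $(|Du_\e|^{2}+\e)^{(q-2)/2}Du_\e$ is $C^{\alpha}(\bom)$ uniformly in $\e$, and one round of classical Schauder followed by global $C^{2,\alpha}$ Monge-Amp\`ere estimates gives the stated $\|u_\e\|_{C^{4,\beta}(\bom)}\leq C$; for $1<q<2$ the same vector field is only $C^{\beta}(\bom)$, and the gradient H\"older results of \cite{Le3,Le4} yield $w_\e\in C^{1,\beta}(\bom)$, hence $u_\e\in C^{3,\beta}(\bom)$, which is exactly the loss in regularity recorded in (c). The main obstacle in the entire plan is the uniform upper bound on $\det D^2 u_\e$ for $1<q<2$: the partial Legendre transform converts it into a standard maximum-principle question, but this is only legitimate because the regularization $\delta(\e)=\e>0$ secures the $C^{4}(\bom)$ regularity of $u_\e$ needed to interpret \eqref{eq-new2} pointwise.
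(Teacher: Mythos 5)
There is a genuine gap in the plan, and it sits exactly at the point you identify as the crux of the theorem. You propose to get \emph{both} the upper and the lower bound for $\det D^2 u_\e$ from the partial Legendre transform by applying the maximum principle to quantities of the form $w^\star_\e+Mu^\star_\e$ and $(w^\star_\e)^{-1}+Mu^\star_\e$. This works for the upper bound but fails for the lower bound, and the failure is structural, not technical. After the partial Legendre transform, the right-hand side $f^\star$ in \eqref{eq-new2} still contains products of second derivatives of $u^\star$ (terms like $u^\star_{\xi\xi}u^\star_{\eta\eta}$ and $(u^\star_{\xi\eta})^2$, see \eqref{righthand}). The useful estimate the paper extracts — that $N(u^\star)\geq 0$, hence $f^\star\geq -C u^\star_{\xi\xi}$ — is a \emph{one-sided} bound; there is no a priori upper bound on $f^\star$, because the second derivatives of $u^\star$ are exactly what is being estimated. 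Consequently the inequality you need to run a minimum principle on $-\log w^\star + Mu^\star$ (or $(w^\star)^{-1}+Mu^\star$) simply isn't available. The paper instead switches to the \emph{full} Legendre transform for the lower bound (Lemma \ref{lowdet_lem}): there, via \eqref{111}, the $q$-Laplacian term is absorbed into the left-hand side as $u^{\ast,ij}[(|y|^2+\delta)^{q/2}/q]_{y_iy_j}$, which is a smooth function of $y$ alone, leaving on the right only the bounded quantity $F^0_z$. This absorption is a special feature of the full Legendre transform that the partial Legendre transform does not share. So the two transforms are used in tandem, one for each side of the determinant bound — this is the key observation you miss.

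There is a second, smaller gap. You assert that the uniform $L^\infty$ bound on $u_\e$, and the uniform gradient bound $|Du_\e|\leq C$, follow from convexity of $u_\e$, the boundary condition, the uniform convexity of $\Omega$, and ``standard barrier constructions.'' This is not so: a convex function with prescribed $C^5$ boundary values can be arbitrarily negative in the interior, so the lower bound on $u_\e$ genuinely uses the equation. The paper's Lemma \ref{umaxlem} obtains $\int_{\p\Omega}(u_{\e,\nu}^+)^2\,dS\leq C$ by integrating the PDE against $u_\e-\tilde u$ for a suitable convex barrier $\tilde u$, crucially using $\inf_{\p\Omega}(K\psi)>0$, the structural hypothesis \eqref{Qcond}, and an integration by parts of the $q$-Laplacian term. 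Similarly, the gradient bound $|Du_\e|\leq C$ in $\Omega$ requires the upper bound $\det D^2 u_\e\leq C$ (one builds a lower barrier for $u_\e$ near $\p\Omega$ with large Hessian determinant), so it cannot be placed before the determinant estimates in the logical order; the paper derives it as \eqref{Dubound} only inside the proof of Lemma \ref{lowdet_lem}. Your use of the partial Legendre transform for the upper determinant bound, the use of $\delta(\e)>0$ to justify working with $C^4$ solutions so that \eqref{eq-new2} holds classically, and the final regularity bootstrap using the two-dimensional divergence-form H\"older theory, are all aligned with the paper; but the two issues above, especially the first, need to be repaired for the argument to close.
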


The proof of Theorem \ref{SBVq12e}, using a priori estimates and degree theory, is similar to that of Theorem 2.1 in \cite{LeCPAM}. 
We focus here on the a priori estimates. 

{\it For the rest of this section, let $u_\e$ be a smooth, uniformly convex solution to (\ref{Abreu4e}).
We drop the subscript $\e$ in $u_\e$, and $w_\e$, etc. to simplify notations. Universal constants in the following paragraphs depend only on $\varphi, \psi$, $\inf_{\p\Omega} \psi$, $\Omega$, $q$, $F^0$, and $\omega$. However, they are independent of $\e$.}
\subsection{Uniform bound for $u$}
We first establish the universal bound for $u$. 
 \begin{lem}
 \label{umaxlem}
 There is a universal constant $C_1$ such that
 $$\|u\|_{L^{\infty}(\Omega)}\leq C_1.$$
 \end{lem}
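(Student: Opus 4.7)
The plan has two parts.

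\textbf{Upper bound.} Since $u_\e$ is convex on the convex domain $\Omega$ and equals $\varphi$ on $\p\Omega$, its maximum over $\bom$ is attained on the boundary, so $\sup_{\bom} u_\e\le\max_{\p\Omega}\varphi\le\|\varphi\|_{L^\infty}$, a universal upper bound independent of $\e$.

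\textbf{Lower bound.} I would invoke the Aleksandrov maximum principle for convex functions: since $u_\e$ is convex in $\Omega$ with $u_\e=\varphi$ on $\p\Omega$,
\[
-\min_{\bom} u_\e\;\le\;\|\varphi\|_{L^\infty}+C(\Omega)\Bigl(\int_\Omega \det D^2u_\e\,dx\Bigr)^{1/2}.
\]
This reduces the problem to a universal pointwise upper bound on $\det D^2u_\e$ in $\Omega$. The boundary condition $w_\e=\psi$ already yields $\det D^2u_\e=1/\psi\le 1/\inf_{\p\Omega}\psi$ on $\p\Omega$, so only the interior has to be controlled.

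\textbf{Interior upper bound via partial Legendre.} I would apply the partial Legendre machinery of Proposition~\ref{new-eq}: the function $w^\star:=-u^\star_{\eta\eta}/u^\star_{\xi\xi}$ coincides with $\det D^2 u_\e$ and satisfies the elliptic (non-divergence) equation~(\ref{eq-new2}) on $\Omega^\star=\mathcal P(\Omega)$, with boundary datum $1/\psi$. At any interior maximum $y_0\in\Omega^\star$ of $w^\star$, the first-order conditions $w^\star_\xi(y_0)=w^\star_\eta(y_0)=0$ together with $D^2w^\star(y_0)\le 0$ reduce the left-hand side of (\ref{eq-new2}) to $w^\star w^\star_{\xi\xi}+w^\star_{\eta\eta}\le 0$. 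I would then show that the right-hand side $(w^\star)^2 f^\star$ is strictly positive at such $y_0$ once $w^\star(y_0)$ exceeds a universal threshold, ruling out interior maxima beyond the larger of that threshold and $1/\inf_{\p\Omega}\psi$, and closing the bound on $\det D^2 u_\e$. Aleksandrov then gives the sought lower bound on $u_\e$.

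\textbf{Main obstacle.} Proving strict positivity of $f^\star$ for large $w^\star$ is the delicate step. Using the identity $-u^\star_{\xi\xi}u^\star_{\eta\eta}=w^\star(u^\star_{\xi\xi})^2$ from the definition of $w^\star$, the $w^\star$-linear part of $f^\star$ organizes into
\[
(\xi^2+(u^\star_\eta)^2+\e)^{\frac{q-4}{2}}(u^\star_{\xi\xi})^2\bigl[\xi^2+(q-1)(u^\star_\eta)^2+\e\bigr]\,w^\star,
\]
strictly positive for \emph{every} $q>1$; moreover the $w^\star$-independent remainder (apart from $-F^0_z u^\star_{\xi\xi}$) is nonnegative by a Schur-complement computation uniform in $q>1$. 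Two related difficulties remain: the contribution $-F^0_z u^\star_{\xi\xi}$ is a priori bounded only by $\omega(|u_\e|) u^\star_{\xi\xi}$, which involves the very $L^\infty$ norm we are trying to bound, and the coefficient of $w^\star$ degenerates when $u^\star_{\xi\xi}=1/u_{11}$ is small. I would break this circularity by combining the already-established upper bound $u_\e\le\|\varphi\|_{L^\infty}$ with the second half of~(\ref{Qcond}) (via a judicious choice of $\tilde z$), and, if needed, by replacing $w^\star$ in the maximum-principle argument with a modified auxiliary function ---for instance $w^\star+\lambda u^\star$ with $\lambda$ depending only on universal data--- so as to absorb the $F^0_z$ contribution and the degeneracy into the positive $w^\star$-linear term.
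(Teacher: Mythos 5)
Your plan diverges fundamentally from the paper's proof, and there is a genuine gap in it. The paper does not bound $\det D^2u$ at all at this stage. Instead it reduces the $L^\infty$ bound to a bound on $\int_{\p\Omega}(u_\nu^+)^n\,dS$ via the inequality $\|u\|_{L^\infty(\Omega)}\le C\|\varphi\|_{L^\infty(\Omega)}+C\bigl(\int_{\p\Omega}(u_\nu^+)^n\bigr)^{1/n}$, then invokes the boundary identity (4.10) of \cite{LeCPAM} for the second boundary value problem, which yields $\int_{\p\Omega}K\psi\,u_\nu^2\,dS\le\int_\Omega\bigl[-F^0_z+\div(\cdot)\bigr](u-\tilde u)\,dx+C\bigl(\int_{\p\Omega}(u_\nu^+)^2\bigr)^{1/2}+C$. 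The divergence term is handled by integrating by parts against $u-\tilde u$ (which vanishes on $\p\Omega$), and the $F^0_z$ term is handled directly by the second half of (\ref{Qcond}) with $\tilde z=\tilde u$. This is a boundary-integral, variational-structure argument; the partial Legendre transform plays no role here.

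Your route is circular in a way that your proposed fixes do not repair. You want to bound $\det D^2u$ first and then invoke Aleksandrov's estimate, but the partial Legendre maximum principle of Lemma \ref{updet_lem} relies on the $L^\infty$ bound from this very lemma in an essential and not merely cosmetic way: the auxiliary function there is $Z=\log w^\star+\alpha(\xi u^\star_\xi-u^\star)=\log w^\star+\alpha u$, and the conclusion $Z\le\sup_{\p\Omega^\star}Z\le C$ only gives $\log w^\star\le C-\alpha u$, i.e.\ $\det D^2u\le Ce^{\alpha\|u\|_{L^\infty}}$. Plugging that into Aleksandrov gives $m^2\le C'e^{\alpha m}$ with $m=-\min u$, which is vacuously true for all large $m$, so the bound does not close. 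Your suggested replacement auxiliary function $w^\star+\lambda u^\star$ cannot rescue this: the key structural identity $u^\star_{\eta\eta}=-w^\star u^\star_{\xi\xi}$ makes $w^\star u^\star_{\xi\xi}+u^\star_{\eta\eta}\equiv 0$, so $u^\star$ contributes no $u^\star_{\xi\xi}$-terms with which to absorb $-F^0_z u^\star_{\xi\xi}$ or the degeneracy; the $\xi$-weight in $\xi u^\star_\xi-u^\star$ is exactly what produces the surviving $2w^\star u^\star_{\xi\xi}$ term in the paper's computation. On the other hand, the first half of your fix is sound: you can indeed bound $F^0_z(x,u_\e)$ from above universally using only $u_\e\le\sup_{\p\Omega}\varphi$ and the second inequality of (\ref{Qcond}) with $\tilde z=0$. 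But that does not resolve the circularity, which lives in the auxiliary-function step, not in the $F^0_z$ bound. In short, your scheme reverses the logical order of Lemmas \ref{umaxlem} and \ref{updet_lem} and, as written, has no way to break the resulting loop; the paper's boundary-integral approach avoids this entirely.
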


 \begin{proof}
 Note that, for a convex function $u\in C^2(\overline{\Omega})$ with $u=\varphi$ on $\p\Omega$, we have (see, e.g., \cite[inequality (2.7)]{Le2})
\begin{equation*}\|u\|_{L^{\infty}(\Omega)} \leq C \|\varphi\|_{L^{\infty}(\Omega)}+ C(n,\Omega,  \|\varphi\|_{C^2(\Omega)})\left(
\int_{\p \Omega}  (u_{\nu}^+)^n \right)^{1/n}\quad \text{where } u_{\nu}^+ =\max (0, u_{\nu}).
\end{equation*}
Thus, to prove the lemma, it suffices to prove 
 \begin{equation}
 \label{unudS}
 \int_{\p\Omega} (u_\nu^{+})^n \,dS\leq C.
 \end{equation}
For this, we use the arguments as in the proof of \cite[Lemma 4.2]{LeCPAM}.
Let $\rho$ be a strictly convex defining function of $\Omega$ as in (\ref{rhoeq}). Let $$\tilde u= \varphi + \mu (e^{\rho}-1).$$ 
Then, for $\mu$ universally large, depending on $n,\Omega$ and  $\|\varphi\|_{C^2(\Omega)}$, the function $\tilde u$ is convex, belongs to $C^{5}(\overline{\Omega})$. Furthermore, as in \cite[Lemma 2.1]{Le2}, we can verify that for some constant $C$ depending only on $n$, $\Omega$, and $\| \varphi\|_{C^{4}(\overline{\Omega})}$
 \begin{myindentpar}{1cm}
  (i) $
\| \tilde{u} \|_{C^{4}(\overline{\Omega})} \leq C,\quad \textrm{and } \det D^2\tilde{u} \ge C^{-1}>0,
$\\
(ii) letting $\tilde{w}=[\det D^2 \tilde{u}]^{-1}$, and denoting by $(\tilde{U}^{ij})$ the 
cofactor matrix of $D^2\tilde u$, then  $$\left\|\tilde U^{ij}D_{ij}\tilde w\right\|_{L^{\infty}(\Omega)}\leq C.$$
  \end{myindentpar}
Let $K(x)$ denote the Gauss curvature of $\p\Omega$ at $x\in\p\Omega$. Then, from the estimate (4.10) in the proof of \cite[Lemma 4.2]{LeCPAM} with $\theta=0$ and $n=2$ which uses (i) and (ii), we obtain
\begin{eqnarray}
\label{concaveq12}
 \int_{\partial \Omega} K \psi u_{\nu}^2 \,dS
& \leq &   \int_{\Omega} [-F^0_z(x, u) + \div ((|Du|^2+\delta(\e))^{\frac{q-2}{2}} Du)] (u-\tilde u)\,dx \nonumber\\[4pt]
&&  +C\left(
\int_{\p \Omega}  (u_{\nu}^+)^2 \,dS \right)^{1/2} + C.
\end{eqnarray}
Since $\tilde u$ is universally bounded, we use (\ref{Qcond}) to get a universal constant $C>0$ such that 
$$-F^0_z(x, u)(u-\tilde u)\leq C.$$
From integration by parts and (i), we find
\begin{eqnarray*}
\int_{\Omega} \div ((|Du|^2+\delta(\e))^{\frac{q-2}{2}}Du) (u-\tilde u) \,dx&=& \int_{\p\Omega} - (|Du|^2+\delta(\e))^{\frac{q-2}{2}}Du\cdot (Du-D\tilde u)\,dx\\[4pt]
&\leq& C(q,  \sup_{\overline{\Omega}}|D\tilde u|)\leq C.
\end{eqnarray*}
Thus (\ref{concaveq12})  gives
\begin{equation*}
 \int_{\partial \Omega} K \psi u_{\nu}^2 \,dS
 \leq  C + C\left(
\int_{\p \Omega}  (u_{\nu}^+)^2 \,dS\right)^{1/2}.
\end{equation*}
In view of  $\inf_{\p\Omega} (K\psi)>0$, we deduce that $\int_{\p\Omega} u_\nu^2 \,dS\leq C$ which establishes (\ref{unudS}).
\end{proof}
\subsection{Hessian determinant bounds for $u$}
Next we provide a universal upper bound for $\det D^2 u$.
\begin{lem}
\label{updet_lem}
 There is a universal constant $C_2$ such that
 $$\det D^2 u\leq C_2\quad\text{in } \Omega.$$
 \end{lem}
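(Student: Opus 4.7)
The plan is to exploit the Legendre transform in order to circumvent the failure of the naive maximum principle on the original equation. From Lemma~\ref{umaxlem}, $\|u\|_{L^\infty(\Omega)} \leq C_1$ universally, so assumption (\ref{Qcond}) gives $F^0_z(x, u(x)) \leq \omega(C_1) =: M$. A direct calculation expanding the divergence shows that for every $q>1$ and every convex $u$,
\[ -\div\bigl((|Du|^2 + \delta(\e))^{(q-2)/2} Du\bigr) \leq 0 \quad \text{in } \Omega, \]
so the first equation of (\ref{Abreu4e}) implies $U^{ij} D_{ij} w \leq M$ in $\Omega$. Since $M$ may be large, this one-sided bound alone is not enough to force $w$ away from $0$ through the usual minimum principle applied to $w$ directly.

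To repair this, I would pass to the Legendre dual $u^{\ast}(y)=x\cdot y-u(x)$, with $y=Du(x)\in \Omega^{\ast}=Du(\Omega)$. Applying Proposition~\ref{new2} to the Lagrangian $F(x,z,p)= (|p|^2+\delta(\e))^{q/2}/q+F^0(x,z)$ yields the dual equation
\[ u^{\ast,ij}D_{ij}w^{\ast} = F_{p_ip_j}(y)\,u^{\ast,ij}-F^0_z\bigl(Du^{\ast},\,y\cdot Du^{\ast}-u^{\ast}\bigr), \qquad w^{\ast} = \log\det D^2u-1.\]
The decisive structural observation is that for all $q>1$ the matrix $F_{p_ip_j}(y)=(|y|^2+\delta)^{q/2-1}\delta_{ij}+(q-2)(|y|^2+\delta)^{q/2-2}y_iy_j$ is positive semi-definite (with eigenvalues bounded below by $\min(1,q-1)(|y|^2+\delta(\e))^{q/2-1}$), while $u^{\ast,ij}$ is positive definite by the uniform convexity of $u$. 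Hence $F_{p_ip_j}u^{\ast,ij}\geq 0$, and we obtain the one-sided bound $u^{\ast,ij}D_{ij}w^{\ast} \geq -M$ throughout $\Omega^{\ast}$.

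I would then absorb the $-M$ into an auxiliary function $\tilde w := w^{\ast} + Ku^{\ast}$ with $K>M/2$. Using $u^{\ast,ij}D_{ij}u^{\ast} = 2$, one computes $u^{\ast,ij}D_{ij}\tilde w \geq -M+2K>0$ throughout $\Omega^{\ast}$, so the strong maximum principle rules out any interior maximum of $\tilde w$ and forces $\tilde w \leq \sup_{\partial \Omega^{\ast}}\tilde w$. On $\partial \Omega^{\ast} = Du(\partial \Omega)$, the boundary condition $\det D^2 u^{\ast} = 1/\det D^2u = \psi$ gives $w^{\ast} = -\log\psi-1$, which is universally bounded because $\inf_{\partial \Omega}\psi>0$; and $u^{\ast}(y) = x\cdot Du(x)-\varphi(x)$ is bounded once $|Du|$ is controlled on $\partial \Omega$. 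Rearranging $w^{\ast}\leq \tilde w + K|u^{\ast}|$ and using the uniform bound $|u^{\ast}(y)|\leq \mathrm{diam}(\Omega)\,|Du|+\|u\|_{L^\infty(\Omega)}$ inside $\Omega^{\ast}$ then yields $w^{\ast} \leq C$, which is equivalent to the desired bound $\det D^2 u \leq C_2 := e^{C+1}$.

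The one auxiliary ingredient still needed is a universal $L^\infty$ bound on $|Du|$, which I would obtain by a standard barrier construction using the uniform convexity of $\Omega$ and the smoothness of $\varphi$; combined with the observation that for a uniformly convex $u$ the function $|Du|^2$ cannot have an interior maximum (away from the critical point where $|Du|=0$), this gives $\sup_\Omega|Du|=\sup_{\partial \Omega}|Du|\leq C$. The main conceptual obstacle is that the linearized operator $U^{ij}D_{ij}$ acting on $w$ does not see the convexity of $F$ in the $p$-variable, whereas the dual operator $u^{\ast,ij}D_{ij}$ acting on $w^{\ast}$ produces precisely the non-negative quantity $F_{p_ip_j}u^{\ast,ij}$; this is the structural mechanism that makes the Legendre transform effective even in the highly singular regime $1<q<2$, where the previous arguments based on $\trace(U^{ij})=\Delta u$ and $\div(|Du|^{q-2}Du)\leq C\Delta u$ break down.
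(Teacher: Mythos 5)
Your proposal has a genuine circularity that defeats it. You propose the full Legendre transform $u^\ast(y)=x\cdot y-u(x)$, observe correctly that $F_{p_ip_j}u^{\ast,ij}\geq 0$, and run a maximum-principle argument with the barrier $\tilde w = w^\ast + Ku^\ast$. But $u^\ast$ is not universally bounded on $\Omega^\ast$, nor is $\sup_{\p\Omega^\ast}\tilde w$ controlled, unless $\sup_\Omega|Du|\leq C$ is already known: indeed $u^\ast(y)=x\cdot Du(x)-u(x)$, so $|u^\ast|$ grows linearly in $|Du|$. You recognize this and claim $|Du|$ is bounded by ``a standard barrier construction using the uniform convexity of $\Omega$ and the smoothness of $\varphi$,'' plus the fact that $|Du|$ attains its maximum on $\p\Omega$. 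The second observation is fine, but the first is not: for a convex $u$ with $u=\varphi$ on $\p\Omega$ and $\|u\|_{L^\infty}$ bounded, the \emph{outer normal} derivative $u_\nu$ on $\p\Omega$ has a universal lower bound but no universal upper bound without further information. The lower-barrier argument one would use to bound $u_\nu$ from above -- comparing $u$ with $\varphi+\mu(e^\rho-1)$ via the comparison principle -- requires an a priori \emph{upper} bound on $\det D^2u$, which is exactly the object the lemma is trying to produce. The paper itself acknowledges this order of operations: the gradient bound $|Du|\leq C_2$ appears at the start of the proof of Lemma~\ref{lowdet_lem} and is derived \emph{from} the conclusion of Lemma~\ref{updet_lem}.

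This circularity is precisely why the paper proves the upper bound by the \emph{partial} Legendre transform (Proposition~\ref{new-eq}) rather than the full one. There the auxiliary quantity in the maximum-principle argument is $Z=\log w^\star+\alpha(\xi u^\star_\xi-u^\star)$, and the key point is that $\xi u^\star_\xi-u^\star$ equals $u(x)$ itself, which is already controlled by Lemma~\ref{umaxlem}; no gradient bound is needed. Likewise the boundary values of $Z$ only involve $w^\star=1/\psi$ and $u=\varphi$ on $\p\Omega$, both bounded. By contrast, a full-Legendre barrier built from $u(x)=y\cdot Du^\ast-u^\ast$ would not be subsolution-friendly (a computation gives $u^{\ast,ij}D_{ij}(y\cdot Du^\ast-u^\ast)=n+y_k\p_k\log\det D^2u^\ast$, which is not signed), so one is forced to use $u^\ast$ and hence $|Du|$. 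The full Legendre transform \emph{does} give a one-sided bound for $\det D^2 u$, but it is the lower one (Lemma~\ref{lowdet_lem}), and only after the gradient has been controlled via the upper bound. Your proposal has the two steps out of order.
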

\begin{proof}
By Lemma \ref{umaxlem}, $\sup_{\Omega} |u|$ is universally bounded by a constant $C_1$.  We will use the partial Legendre transform in Proposition \ref{new-eq}. Note that
\begin{eqnarray*}N(u^\star):
&=&(\xi^2+{u^\star_\eta}^2 +\delta)(1+{u^\star_{\xi\eta}}^2-  u^\star_{\eta\eta}u^\star_{\xi\xi})+(q-2)[(\xi+u^\star_\eta u^\star_{\xi\eta})^2-{u^\star_\eta}^2u^\star_{\xi\xi}u^\star_{\eta\eta}]\\
&\geq &(q-1)\xi^2+{u^*_\eta}^2+\xi^2{u^*_{\xi\eta}}^2+(q-1){u^\star_\eta}^2{u^\star_{\xi\eta}}^2+2(q-2){u^\star_\eta}\xi {u^\star_{\xi\eta}}\\&&-\xi^2 u^\star_{\xi\xi}u^\star_{\eta\eta} -(q-1){u^\star_\eta}^2u^\star_{\xi\xi}u^\star_{\eta\eta}\geq 0.
\end{eqnarray*}
Thus, recalling (\ref{Qcond}), we find
\begin{eqnarray}
\label{inequ-1}
f^\star=(\xi^2+{u^\star_\eta}^2+\delta)^{\frac{q}{2}-2} N(u^\star) -F^0_z u^\star_{\xi\xi} \geq -F^0_z u^\star_{\xi\xi}\geq-Cu^\star_{\xi\xi}
\end{eqnarray}
where $C$ depends on $C_1$ and $\omega$.

Let $Z=\log w^\star+\alpha (\xi u^\star_\xi-u^\star)$. Note that $\xi u^\star_\xi-u^\star=u$ is universally bounded by Lemma \ref{umaxlem}.
By simple computations,
\begin{eqnarray*}
(\xi u^\star_\xi-u^\star)_\xi&=&\xi u^\star_{\xi\xi},\\
(\xi u^\star_\xi-u^\star)_\eta&=&\xi u^\star_{\xi\eta}-u^\star_\eta,\\
(\xi u^\star_\xi-u^\star)_{\xi\xi}&=& u^\star_{\xi\xi}+\xi u^\star_{\xi\xi\xi},\\
(\xi u^\star_\xi-u^\star)_{\eta\eta}&=&\xi u^\star_{\xi\eta\eta}-u^\star_{\eta\eta},\\
w^\star_\xi&=&-\frac{u^\star_{\eta\eta\xi}u^\star_{\xi\xi}-u^\star_{\eta\eta}u^\star_{\xi\xi\xi}}{{u^\star_{\xi\xi}}^2}=-\frac{w^\star u^\star_{\xi\xi\xi}+u^\star_{\xi\eta\eta}}{u^\star_{\xi\xi}}.
\end{eqnarray*}
So we have
\begin{eqnarray*}
Z_\xi&=&\frac{w^\star_\xi}{w^\star}+\alpha\xi u^\star_{\xi\xi},\\
Z_{\xi\xi}&=&\frac{w^\star_{\xi\xi}}{w^\star}-\frac{{w^\star_\xi}^2}{{w^\star}^2}+\alpha(u^\star_{\xi\xi}+\xi u^\star_{\xi\xi\xi}),\\
Z_\eta&=&\frac{w^\star_\eta}{w^\star}+\alpha(\xi u^\star_{\xi\eta}-u^\star_\eta),\\
Z_{\eta\eta}&=&\frac{w^\star_{\eta\eta}}{w^\star}-\frac{{w^\star_\eta}^2}{{w^\star}^2}+\alpha(\xi u^\star_{\xi\eta\eta}-u^\star_{\eta\eta}).
\end{eqnarray*}
Then, using (\ref{inequ-1}), we can estimate 
\begin{eqnarray*}
&&w^\star Z_{\xi\xi}+Z_{\eta\eta}\\
&=&\frac{1}{w^\star}(w^\star w^\star_{\xi\xi}+w^\star_{\eta\eta}-{w^\star_\xi}^2-\frac{1}{w^\star}{w^\star_\eta}^2)+\alpha\xi(w^\star u^\star_{\xi\xi\xi}+u^\star_{\xi\eta\eta})+\alpha(w^\star u^\star_{\xi\xi}-u^\star_{\eta\eta})\\
&= &\frac{1}{w^\star}(\frac{1}{w^\star}{w^\star_\eta}^2+{w^\star}^2 f^\star)-\alpha\xi w^\star_\xi u^\star_{\xi\xi}+2\alpha w^\star u^\star_{\xi\xi}\\
&\geq &\frac{1}{w^\star}(\frac{1}{w^\star}{w^\star_\eta}^2-C{w^\star}^2u^\star_{\xi\xi})-\alpha\xi w^\star_\xi u^\star_{\xi\xi}+2\alpha w^\star u^\star_{\xi\xi}\\
&=&\frac{{w^\star_\eta}^2}{{w^\star}^2}-\alpha \xi w^\star Z_\xi+\alpha^2\xi^2 w^\star u^\star_{\xi\xi}+(2\alpha-C) w^\star u^\star_{\xi\xi}.
\end{eqnarray*}
Choosing $\alpha>0$ suffienctly large, we have  $$w^\star  Z_{\xi\xi}+Z_{\eta\eta}+\alpha \xi w^\star Z_\xi\geq 0.$$
Hence $Z=\log w^\star+\alpha (\xi u^\star_\xi-u^\star)$ attains its maximum on $\partial\Omega^\star$. Note that if $y= (\xi,\eta)= \mathcal{P}(x)\in \p\Omega^\star$, then 
$$w^\star(y) =-\frac{u^\star_{\eta\eta}}{u^\star_{\xi\xi}}= \det D^2 u(x) = \frac{1}{\psi(x)} \leq \frac{1}{\inf_{\p\Omega}\psi}\leq C.$$
It follows that $w^\star \leq C$, and hence, $\det D^2u\leq C_2$.
\end{proof}
\begin{rem}
The above calculations use that $w_\e^{\star}\in C^2$. They do not apply directly to the solutions of (\ref{Abreu4}) when $1<q<2$ because $w\not \in C^2(\Omega)$.
\end{rem}
Finally, we prove a universal positive lower bound for $\det D^2 u$.
\begin{lem}
\label{lowdet_lem}
 There is a universal constant $C_3>0$ such that
 $$\det D^2 u\geq C_3\quad\text{in } \Omega.$$
 \end{lem}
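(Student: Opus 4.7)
The plan is to use the full Legendre transform to derive a universal positive lower bound on $\det D^2 u$, mirroring the argument that produced the lower bound on $\det D^2 u$ in the proof of Theorem \ref{SBVn3}. A crucial distinction is that we do not assume the sign condition $F^0_z \leq 0$; instead we exploit the weaker fact that, along our universally bounded solution $u$, the quantity $F^0_z$ is \emph{universally bounded} (both above and below) thanks to Lemma \ref{umaxlem} and smoothness of $F^0$.

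First I would establish a universal gradient bound $\sup_{\Omega} |Du| \leq C$. Having $\|u\|_{L^\infty} \leq C_1$ from Lemma \ref{umaxlem} and $\det D^2 u \leq C_2$ from Lemma \ref{updet_lem}, and since $\varphi \in C^5(\overline{\Omega})$, a standard barrier construction (as in \cite[Lemma 2.5]{Le2}) gives the boundary gradient estimate, and convexity of $u$ propagates it to the interior. Consequently, the Legendre transform $u^\ast(y) = x\cdot y - u(x)$, $y = Du(x)$, is defined on the bounded image $\Omega^\ast = Du(\Omega)$, lies in $C^{4,\gamma}(\overline{\Omega^\ast})$ and is uniformly convex in the interior, and satisfies $|u^\ast| \leq C$.

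Applying Proposition \ref{new2} with $G(t) = \log t$ and $F(x,z,p) = F^1(p) + F^0(x,z)$, where $F^1(p) = (|p|^2 + \delta(\e))^{q/2}/q$, one has $F_{p_i z} = F_{p_i x_i} = 0$, $w^\ast = -\log \det D^2 u^\ast - 1$, and $F^1_{p_i p_j}(y)\, u^{\ast,ij} = u^{\ast,ij} D_{ij} F^1(y)$, so the dual equation becomes
\[
u^{\ast,ij} D_{ij}\bigl( w^\ast - F^1(y) \bigr) = -F^0_z\bigl(Du^\ast,\, y \cdot Du^\ast - u^\ast\bigr) \quad \text{in } \Omega^\ast.
\]
Set $V := F^1(y) + C_4\, u^\ast - w^\ast$ for a large universal constant $C_4$. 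Using $u^{\ast,ij} D_{ij} u^\ast = 2$, this yields
\[
u^{\ast,ij} D_{ij} V = 2 C_4 + F^0_z\bigl(Du^\ast,\, y \cdot Du^\ast - u^\ast\bigr) > 0,
\]
once $C_4$ is chosen larger than half of the (universal) $L^\infty$ bound on $F^0_z(\cdot, u(\cdot))$. By the strong maximum principle for the pointwise strictly elliptic operator $u^{\ast,ij} D_{ij}$, the function $V$ attains its maximum on $\partial \Omega^\ast$. There, $\det D^2 u^\ast(y) = 1/\psi(x)$ is bounded between two universal positive constants, so $|w^\ast|$ is bounded on $\partial\Omega^\ast$; together with the universal bounds on $F^1(y)$ and $|u^\ast|$, this gives $V \leq C$ on $\partial\Omega^\ast$, hence in all of $\Omega^\ast$.

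Rearranging $V \leq C$ and using $F^1 \geq 0$ and $|u^\ast| \leq C$, we obtain $w^\ast \geq -C'$, i.e., $-\log \det D^2 u^\ast \geq 1 - C'$, whence $\det D^2 u^\ast \leq e^{C'-1}$ and therefore $\det D^2 u = 1/\det D^2 u^\ast \geq C_3 > 0$ in $\Omega$. The main technical point, as throughout the a priori estimates, is keeping every constant independent of $\e$; this is guaranteed because each bound used (Lemmas \ref{umaxlem} and \ref{updet_lem}, the gradient estimate, and the $L^\infty$ bound on $F^0_z$ along $u$) is $\e$-independent, and because the maximum principle requires only pointwise strict ellipticity of $u^{\ast,ij} D_{ij}$, not uniform ellipticity up to $\partial\Omega^\ast$.
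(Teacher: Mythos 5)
Your proof is correct and follows essentially the same route as the paper: establish the gradient bound, pass to the Legendre transform, absorb the $F^1_{p_ip_j}u^{\ast,ij}$ term into the left side of the dual equation, add a multiple of $u^\ast$ to make the resulting quantity a subsolution, and apply the maximum principle together with the boundary condition $w^\ast=\log(1/\psi)-1$ on $\partial\Omega^\ast$. Your $V = F^1(y) + C_4 u^\ast - w^\ast$ differs from the paper's auxiliary function $\log\det D^2 u^\ast + F^1 + Au^\ast$ only by an additive constant, so the two arguments coincide.
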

 \begin{proof}
 From the universal upper bound for $\det D^2 u$ in Lemma \ref{updet_lem},  we can construct an explicit barrier using the uniform convexity of $\Omega$ 
  to  show that, for a universal constant still denoted by $C_2$, 
  \begin{equation}
  \label{Dubound}
  |Du|\leq C_2 ~\text{in }\Omega.
  \end{equation}
  
  We use the Legendre transform in Proposition \ref{new2}. Recall that $$w^{\ast}=-\log \det D^2 u^{\ast}-1.$$
 When $F(x, z, p)=\frac{(|p|^2+\delta)^{\frac{q}{2}}}{q}+F^0(x, z)$, the term
\begin{eqnarray*}
F_{p_ip_j} u^{\ast, ij}=  u^{\ast, ij}\left[\frac{(|y|^2+\delta)^{\frac{q}{2}}}{q}\right]_{y_i y_j}
\end{eqnarray*}
can be absorbed into the left hand side of  \eqref{new-eq-leg} and we get, after a sign change of both sides
\beq\label{111}
u^{\ast, ij}\left[\log \det D^2 u^{\ast}+ \frac{(|y|^2+\delta)^{\frac{q}{2}}}{q}\right]_{y_i y_j}=F^0_z.
\eeq
Note that the right hand side of  \eqref{111} depends on $\sup_\Omega |u^{\ast}|$ and $\sup_\Omega |Du^{\ast}|$. 
Observe that 
$$u^{\ast, ij}\left[\log \det D^2 u^{\ast}+ \frac{(|y|^2+\delta)^{\frac{q}{2}}}{q} + Au^{\ast}\right]_{y_i y_j} = F^0_z + nA\geq 0$$
if $A$ is sufficiently large depending on $\sup_\Omega |u^{\ast}|$ and $\sup_\Omega |Du^{\ast}|$. We use the maximum principle to conclude that $\log \det D^2 u^{\ast}+ \frac{|y|^q}{q} + Au^{\ast}$ attains its maximum on $\p\Omega^\ast$. 
 If $y=Du(x)\in \p\Omega^*$, then 
$$\log \det D^2 u^\ast(y)= \log [\det D^2 u(x)]^{-1} =\log \psi (Du^*(y))\leq C.$$
It follows that $\log \det D^2 u^{\ast}\leq C$ from which we find $\det D^2 u\geq C_3= e^{-C}$. 
 \end{proof}
 \subsection{Proof of Theorem \ref{SBVq12e}}
 Before giving the proof of Theorem \ref{SBVq12e}, we state a main tool regarding H\"older estimates for the linearized Monge-Amp\`ere equation.
 By combining the global H\"older estimates for the linearized Monge-Amp\`ere equation in \cite[Theorem 1.4]{Le1} (in all dimensions, with right hand side being in $L^n$) and \cite[Theorem 1.2]{Le4} (in two dimensions, with right hand side being the divergence of a bounded vector field), we obtain the following theorem.
\begin{thm} [Global H\"older estimates for the linearized Monge-Amp\`ere equation]
\label{global-H}
Let $\Omega$ be a bounded, uniformly convex domain in $\R^{n}$ ($n=2$) with $\p\Omega\in C^{3}$. Let $\phi: \overline{\Omega}\rightarrow \R$, $\phi\in C^{0,1}(\overline{\Omega})\cap C^{2}(\Omega)$  be a convex function satisfying
$$0<\lambda\leq \det D^{2}\phi\leq \Lambda<\infty,~
\text{and}~\phi\mid_{\p\Omega}\in C^{3}.$$
Denote by $(\Phi^{ij})= (\det D^2\phi) (D^2\phi)^{-1}$ the cofactor matrix of $D^2 \phi$.
 Let $v\in C(\overline{\Omega})\cap C^2(\Omega)$ be the solution to the linearized Monge-Amp\`ere equation
 \begin{equation*}
 \left\{
 \begin{alignedat}{2}
   \Phi^{ij}D_{ij}v ~&=g+ \div G  ~&&\text{in} ~\Omega, \\\ 
 v&= \varphi ~&&\text{on}~ \p\Omega,
 \end{alignedat}
 \right.
\end{equation*}
 where $\varphi\in C^{\alpha}(\p\Omega)$ for some $\alpha\in (0, 1)$, $G\in L^{\infty}(\Omega, \R^n)$ and $g\in L^n(\Omega)$. Then $v\in C^{\alpha_1}(\overline{\Omega})$ 
with the estimate
$$\|v\|_{C^{\alpha_1}(\overline{\Omega})}\leq C\left(\|\varphi\|_{C^{\alpha}(\p\Omega)} + \|g\|_{L^{n}(\Omega)} + \|G\|_{L^{\infty}(\Omega)}\right)$$
where $\alpha_1$ and $C$ depend only on $\lambda$, $\Lambda$, $n$, $\alpha$, $\|\phi\|_{C^3(\p\Omega)}$, $\|\p\Omega\|_{C^3}$ and the uniform convexity of $\Omega.$ 
 $C$ also depends on $diam (\Omega)$.
  \end{thm}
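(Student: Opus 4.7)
The plan is to exploit linearity of $\Phi^{ij}D_{ij}$ and split the right hand side $g+\div G$ into its two parts, each covered by one of the cited results. I would not try to treat the full right hand side with a single argument, since no such estimate is available in the literature in this combined form.

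First, I would introduce an auxiliary function $v_{1}\in C(\bom)\cap C^{2}(\Om)$ solving
\begin{equation*}
\Phi^{ij}D_{ij}v_{1}=g\quad\text{in }\Om,\qquad v_{1}=\varphi\quad\text{on }\pom.
\end{equation*}
Existence of such $v_{1}$ follows by approximating $g$ by smooth functions $g_{k}$, solving the approximating problems via standard linear elliptic theory (the operator is uniformly elliptic in the interior thanks to $0<\lambda\le\det D^{2}\phi\le\Lambda$, with the required boundary regularity coming from $\phi|_{\pom}\in C^{3}$ and $\pom\in C^{3}$), and passing to the limit $k\to\infty$ using the a priori Hölder estimate of \cite[Theorem 1.4]{Le1}. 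That same theorem then delivers
\begin{equation*}
\|v_{1}\|_{C^{\alpha_{1}'}(\bom)}\le C\bigl(\|\varphi\|_{C^{\alpha}(\pom)}+\|g\|_{L^{n}(\Om)}\bigr),
\end{equation*}
with $\alpha_{1}'\in(0,1)$ and $C$ depending only on the parameters listed in the statement.

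Next, I would set $v_{2}:=v-v_{1}\in C(\bom)\cap C^{2}(\Om)$. By linearity, $v_{2}$ solves
\begin{equation*}
\Phi^{ij}D_{ij}v_{2}=\div G\quad\text{in }\Om,\qquad v_{2}=0\quad\text{on }\pom.
\end{equation*}
Applying \cite[Theorem 1.2]{Le4}, the two-dimensional global Hölder estimate for the linearized Monge-Ampère equation whose right hand side is the divergence of a bounded vector field, yields
\begin{equation*}
\|v_{2}\|_{C^{\alpha_{2}'}(\bom)}\le C\|G\|_{L^{\infty}(\Om)}.
\end{equation*}
Setting $\alpha_{1}:=\min\{\alpha_{1}',\alpha_{2}'\}$ and combining the two estimates through the triangle inequality applied to $v=v_{1}+v_{2}$ gives the stated bound.

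The only delicate points are the clean construction of $v_{1}$ (at the level of weak/viscosity sense, so that the subtraction $v-v_{1}$ produces a function whose linearized Monge-Ampère image is genuinely $\div G$) and the verification that the dependence of $\alpha_{1}'$, $\alpha_{2}'$ and the constants in the two cited theorems collapses to the list of parameters in the statement; both are standard matters of matching hypotheses and using the stability of the estimate in \cite[Theorem 1.4]{Le1} under smooth approximation of $g$.
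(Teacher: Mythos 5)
Your proof is correct and is essentially what the paper intends: the paper gives no detailed argument for Theorem~\ref{global-H}, merely asserting that it follows ``by combining'' \cite[Theorem 1.4]{Le1} and \cite[Theorem 1.2]{Le4}, and your linearity decomposition $v=v_{1}+v_{2}$ is exactly the obvious way to carry out that combination. The approximation construction of $v_{1}$ that you sketch to justify forming $v_{2}=v-v_{1}$, and the observation that the parameter dependence of the two cited estimates collapses to the stated list, are precisely the routine details the paper is leaving to the reader.
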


\begin{proof}[Proof of Theorem \ref{SBVq12e}]
By Lemmas \ref{updet_lem} and \ref{lowdet_lem}, we have
$$0<C_3 \leq \det D^2 u_\e\leq C_2\quad \text{and } |u_\e| + |Du_\e| \leq C_2,\quad \text{in }\Omega.$$
We apply Theorem \ref{global-H} to the solution $w_\e$ of
 \begin{equation}
 \label{weeq}
 \left\{
 \begin{alignedat}{2}
   U_\e^{ij}D_{ij}w_\e~&=-div ((|Du_\e|^2 +\delta(\e))^{\frac{q-2}{2}} Du_\e) + F^0_z(x, u_\e)  ~&&\text{in} ~\Omega, \\\
 w_\e&= \psi ~&&\text{on}~ \p\Omega,
 \end{alignedat}
 \right.
\end{equation}
and find that  $w_\e\in C^{\alpha}(\overline{\Omega})$ with universal estimates and a universal $\alpha\in (0, 1)$. Now we apply the global $C^{2,\alpha}$ regularity for the Monge-Amp\`ere equation  (see  \cite{TW3})
 \begin{equation*}
 \left\{
 \begin{alignedat}{2}
   \det D^2 u_\e~&=w_\e^{-1}~&&\text{in} ~\Omega, \\\ 
 u_\e&= \varphi~&&\text{on}~ \p\Omega,
 \end{alignedat}
 \right.
\end{equation*}
to obtain 
$u_\e\in C^{2,\alpha}(\overline{\Omega})$ with universal estimates. As a consequence, the second order operator $U_\e^{ij} D_{ij}$ is uniformly elliptic with H\"older continuous coefficients with a universal exponent $\alpha$. 

\vskip 8pt
(a) The right hand side of (\ref{weeq})
is
$$f_\e:= -(|Du_\e|^2 +\delta(\e))^{\frac{q-2}{2}} \Delta u_\e - (q-2)(|Du_\e|^2 +\delta(\e))^{\frac{q-4}{2}} D_i u_\e D_j u_\e D_{ij} u_\e+ F^0_z(x, u_\e)\in C^{\alpha}(\overline{\Omega})  $$
with $\|f_\e\|_{C^{\alpha}(\overline{\Omega})}$ depending also on $\e$. Therefore, we can estimate the $C^{2, \alpha}(\overline{\Omega})$ norm of $w_\e$ from (\ref{weeq}). This gives
$u_\e \in C^{4, \alpha}(\overline{\Omega})$ with estimates depending on $\e$. It follows that we can estimate the $C^{1}(\overline{\Omega})$ norm of $f_\e$ depending also on $\e$. Using (\ref{weeq}) again, we find that $w_\e\in C^{2,\gamma}(\overline{\Omega})$ for all $\gamma<1$. Hence $u_\e \in C^{4, \gamma}(\overline{\Omega})$ with estimates depending on $\e$.

\vskip 8pt
(b) Assume $q\geq 2$. Then $f_\e\in C^{\gamma}(\overline{\Omega})$ for some universal $\gamma\in (0, 1)$ and
$\|f_\e\|_{C^{\gamma}(\overline{\Omega})}\leq C_4$
for some universal constant $C_4$. Therefore, we can estimate the $C^{2, \beta}(\overline{\Omega})$ norm of $w_\e$ from (\ref{weeq}) for $\beta:=\min\{\alpha,\gamma\}$. This gives
$u_\e \in C^{4, \beta}(\overline{\Omega})$ with a universal estimate
$$\|u_\e\|_{C^{4,\beta}(\overline{\Omega})}\leq C_5.$$

\vskip 8pt
 (c) Assume $1<q<2$. Note that $(|Du_\e|^2 +\delta(\e))^{\frac{q-2}{2}} Du_\e$ is H\"older continuous with a universal exponent $\gamma$ depending on $\alpha$ and $q>1$. Let $\beta=\min\{\alpha,\gamma\}$.
   Using (\ref{weeq}), we see that the universal $C^{1,\beta}(\overline{\Omega})$ estimates for $w_\e$ follows from \cite[Theorem 8.33]{GT}. Hence, we have the universal
   $C^{3,\beta}(\overline{\Omega})$ estimates for $u_\e$. 
\end{proof}


\section{Proof of Theorem \ref{RCthm}}
\label{RCsect}
\begin{proof}[Proof of Theorem \ref{RCthm}]
The proof of this theorem follows the strategy of that of \cite[Theorem 1.4]{Le7}.\\

(i) We solve
 (\ref{Abreuq})-(\ref{feq}) in $W^{4, s}(\Omega)$ $(s>n=2)$ by proving a priori $W^{4, s}(\Omega)$ estimates for uniformly convex $W^{4, s}(\Omega)$ solutions and then using the degree theory. Once the a priori $W^{4, s}(\Omega)$ estimates
 have been established (see, \ref{uew4s}), we can use a Leray-Schauder degree argument as in \cite[Theorem 2.1]{LeCPAM} to show the existence of a uniformly convex solution  $u_{\e}\in W^{4, s}(\Omega)$ (for all $s<\infty$) to the system (\ref{Abreuq})-(\ref{feq}).
 Thus, it suffices to prove these a priori estimates. 
 
 Let $u_\e\in W^{4, s}(\Omega)$ be a uniformly convex solution to (\ref{Abreuq})-(\ref{feq}). We will prove that, if $\e$ is sufficiently small, then
 \begin{equation}
 \label{uew4s}
 \|u_\e\|_{W^{4, s}(\Omega)} \leq C(\e, \varphi,\psi,\Omega, \Omega_0, s, \eta, \gamma).
 \end{equation}
In what follows, we call constants depending on $ \varphi,\psi,\Omega, \Omega_0, s, \eta, \gamma$ {\it universal}. Constants depending on $\e$ will be mentioned explicitly.

The key step in establishing (\ref{uew4s}) and in proving (ii) is to prove the universal bound, independent of $\e$, for $u_\e$ when $\e$ is sufficiently small. 

\vskip 10pt

{\it Step 1: Universal $L^{\infty}$ bound for $u_\e$.} We will prove that, if $\e$ is sufficiently small, then
\begin{equation}
\label{uebd}
 \|u_\e\|_{L^{\infty}(\Omega)} \leq C(\varphi,\psi,\Omega, \Omega_0, s, \eta, \gamma).
\end{equation}
Let 
\begin{equation}
\label{F1e}
F^1_\e(x, p) =\left[(|p|^2 +\delta(\e))^{\frac{q}{2}}/q-x\cdot p\right]\gamma(x).
\end{equation}
 Then, 
$$F^{1}_{\e, p_i p_j}=(|p|^2+\e)^{\frac{q-4}{2}} [(|p|^2+\delta(\e))\delta_{ij} + (q-2)p_i p_j]\gamma(x), $$
and, recalling that $\delta(\e)=\e$, 
\begin{equation}
\label{D2F1}
\min\{1, q-1\}(|p|^2+\e)^{\frac{q-2}{2}}\gamma(x) I_2 \leq (F^{1}_{\e, p_i p_j})\leq \max\{1, q-1\}(|p|^2+\e)^{\frac{q-2}{2}}\gamma(x) I_2,
\end{equation}
where $I_2$ is the identity $2\times 2$ matrix.
Therefore, the lowest eigenvalue of $ (F^{1}_{\e, p_i p_j})$
blows up when $p$ is small if $1<q<2$ or  $p$ large if $q>2$. This is a crucial difference between $F^1_\e$ and the function $F^1$ in \cite[Theorem 1.4]{Le7} where $F^1$ was assumed to satisfy $F^{1}_{p_i p_j}\leq C_\ast I_2$ for some universal constant $C_\ast$. Thus, we need to refine the analysis in \cite{Le7} to overcome the unboundedness of the Hessian of $F^1_\e$ in the $p$ variable.

Since $u_\e \leq \sup_{\p\Omega} u_\e=\sup_{\p\Omega} \varphi$ by convexity, to prove (\ref{uebd}), it suffices to prove that 
\begin{equation}
\label{ueom}
u_\e\geq -C \quad\text{in }\Omega.
\end{equation}
  Let us denote as in (3.1) in \cite{Le7}:
 $$ \tilde u =\varphi + \e^{\frac{1}{3n^2}} (e^{\rho}-1).$$
 Then
 \begin{eqnarray*}f_\e&=& \left[F^0_z(x, u_\e(x))- \frac{\p}{\p x_i} \left(\frac{\p F^1_\e}{\p  p_i}(x, Du_\e(x))\right)\right]\chi_{\Omega_0}(x) + \frac{1}{\e} (u_\e(x) -\tilde u(x))\chi_{\Omega\setminus \Omega_0}(x)\\
 &=& \left[ F_z^0(x, u_\e(x)) -F^1_{\e, p_i x_i} (x, Du_\e(x)) - F^1_{\e, p_i p_j}(x, Du_\e(x)) D_{ij}u_\e\right]\chi_{\Omega_0}(x) \\&&+ \frac{1}{\e} (u_\e(x) -\tilde u(x))\chi_{\Omega\setminus \Omega_0}(x).
 \end{eqnarray*}
As in (3.6) in \cite{Le7}, we have
\begin{equation}
\label{36eq}
\int_{\partial \Omega} \e((u_\e)_{\nu}^{+})^n dS \leq C+  \int_{\Omega} -f_\e (u_\e-\tilde u)dx.
\end{equation}
Let
$$M:= \sup_{\p\Omega}|\varphi|, \quad m:= \inf_{\Omega_0}\varphi + \inf_{\Omega_0}(e^\rho-1),\quad \bar\alpha:= \frac{\dist(\Omega_0,\p\Omega)}{\text{diam}(\Omega)}>0.$$

\vskip 5pt

{\bf Case 1:} If $$u_\e\geq \frac{m-M-1}{\bar\alpha}\quad \text{in } \Omega_0$$ then, by convexity, we obtain (\ref{ueom}).

\vskip 8pt

{\bf Case 2: } There is $z\in\Omega_0$ such that 
$$u_\e(z)\leq \frac{m-M-1}{\bar\alpha}<0.$$

We will show that \begin{equation}
\label{ueptil}
u_\e\leq \tilde u \quad \text{in } \Omega_0.
\end{equation}
Indeed, for any $x\in\Omega_0\setminus\{z\}$, let $y$ be the intersection of the ray $zx$ with $\p\Omega$. Then
$$x= \alpha z+ (1-\alpha) y\quad\text{where } \alpha=\frac{|x-y|}{z-y}\geq \bar\alpha.$$
Thus, by convexity
$$u_\e(x)\leq \alpha u_\e(z) + (1-\alpha) u_\e(y) \leq \bar\alpha u_\e(z) + M\leq m-1< \inf_{\Omega_0}\tilde u\leq \tilde u(x).$$
Therefore, we have (\ref{ueptil}). With (\ref{ueptil}), 
 we have
 \begin{equation}
  \label{39eq}
  \int_{\Omega_0} F^{1}_{\e, p_i p_j} D_{ij}u_\e(u_\e-\tilde u) dx \leq 0.
 \end{equation}
Let us now continue with the proof of (\ref{ueom}) in {\bf Case 2}. Using the convexity of $F^0$ (see (\ref{F0q})) and the universal boundedness of $\tilde u$, we get
\begin{equation}
\label{F0e}
\int_{\Omega_0}- F_z^0(x, u_\e(x)) (u_\e-\tilde u) dx\leq \int_{\Omega_0}- F_z^0(x, \tilde u(x)) (u_\e-\tilde u) dx\leq  C +  C_1\|u_\e\|_{L^{\infty}(\Omega_0)}.
\end{equation}
Since $u_\e$ is convex with $u_\e=\varphi$ on $\Omega$, we have the following gradient estimate (see, (3.1) in \cite{LeCPAM})
 \begin{equation}
 \label{uegrad}
 |Du_\e(x)|\leq \frac{\max_{\p\Omega} \varphi - u_\e(x)}{\dist(x,\p\Omega)}~\quad\text{ for } x\in\Omega.
 \end{equation}
 
 For $F^1_\e$ defined by (\ref{F1e}), 
$$F^{1}_{\e, p_i x_i}(x,p)=(|p|^2+\e)^{\frac{q-2}{2}} p_i\gamma_{x_i} -x_i\gamma_{x_i}-\gamma(x). $$
We recall that $\gamma$ is a nonnegative, Lipschitz function and it is a constant if $q>2$.
Hence, we have
 $$ |F^1_{\e, p_i x_i}(x, p)| \leq  C_2(|p| +1) \text{ for all }x\in\Omega_0~\text{and for each } i.$$
 This the only place where we need to assume $\gamma$ is constant when $q >2$.
 Thus, using (\ref{uegrad}), we can estimate in $\Omega_0$:
 \begin{eqnarray}
 \label{F1xp}
 |F^1_{\e, p_i x_i} (x, Du_\e(x)) (u_\e(x)-\tilde u(x) )| &\leq& C_2( |Du_\e(x)|+ 1)(|u_\e(x)| + C) \nonumber\\[4pt]
&\leq& C_3(|u_\e(x)|^2 + 1).
\end{eqnarray}
By Corollary 2.2 in \cite{Le7}, we have
\begin{equation}
\label{corl7}
\|u_\e\|_{L^{\infty}(\Omega_0)}\leq C +  C\int_{\Omega\setminus\Omega_0}|u_\e| dx.
\end{equation}
This
 together with $\|\tilde u\|_{L^{\infty}(\Omega)}\leq C$ gives
\begin{equation}
\label{ueom0}
\|u_\e\|_{L^{\infty}(\Omega_0)}\leq C +   C\int_{\Omega\setminus\Omega_0}|u_\e-\tilde u|^2 dx. 
\end{equation}
From (\ref{F0e}), (\ref{39eq}), (\ref{F1xp}) and (\ref{ueom0}), we find that
\begin{eqnarray}
\label{bounde1}
\int_{\Omega_0}- f_{\e} (u_\e-\tilde u) dx 
&=& \int_{\Omega_0} \left [F^1_{\e, p_i x_i} (x, Du_\e(x)) + F^1_{\e, p_i p_j}(x, Du_\e(x)) D_{ij}u_\e\right] (u_\e-\tilde u) \,dx \nonumber \\&&+ \int_{\Omega_0}- F_z^0(x, u_\e(x)) (u_\e-\tilde u) \,dx \nonumber\\
&\leq & C +  C_1\|u_\e\|_{L^{\infty}(\Omega_0)} + C_3\int_{\Omega_0}  (|u_\e|^2 + 1) \,dx \nonumber\\
&\leq& C\|u\|^2_{L^{\infty}(\Omega_0)} +C\nonumber\\
&\leq& C + C_4 \int_{\Omega\setminus\Omega_0}|u_\e-\tilde u|^2 \,dx.
\end{eqnarray}
Note that $f_\e= \frac{1}{\e} (u_\e-\tilde u)$ in $\Omega\backslash\Omega_0$. Hence, it follows from (\ref{36eq}) and (\ref{ueom0}) that
\begin{eqnarray*} 
\int_{\partial \Omega} \e  ((u_\e)^{+}_{\nu})^n dS +  \int_{\Omega\setminus\Omega_0} \frac{1}{2\e}|u_\e-\tilde u|^2 dx& \le& C  +  \int_{\Omega} -f_{\e} (u_\e-\tilde u ) dx +   \int_{\Omega\setminus\Omega_0} \frac{1}{2\e}|u_\e-\tilde u|^2 dx\nonumber \\&=& C + \int_{\Omega_0}- f_{\e} (u_\e-\tilde u ) dx+  \int_{\Omega\setminus\Omega_0} -\frac{1}{2\e}|u_\e-\tilde u|^2 dx \\
 &\leq& C + C_4 \int_{\Omega\setminus\Omega_0}|u_\e-\tilde u|^2 dx   -  \int_{\Omega\setminus\Omega_0} \frac{1}{2\e}|u_\e-\tilde u|^2 dx.
\end{eqnarray*} 
Therefore, if $\e\leq \e_0$ where $\e_0<1$ is universally small, then we get
\begin{equation}
\label{ep_1}
\int_{\partial \Omega} \e  ((u_\e)^{+}_{\nu})^n dS +  \int_{\Omega\setminus\Omega_0} \frac{1}{2\e}|u_\e-\tilde u|^2 dx \leq C
\end{equation}
Using (\ref{corl7}) and the universal boundedness of $\tilde u$, we obtain  (\ref{uebd}) as asserted.
\vskip 10pt
{\it Step 2: $W^{4, s}$ estimate for $u_\e$} if $\e\leq \e_0$.\\
From (\ref{uebd}) and gradient estimate (\ref{uegrad}), we find
\begin{equation}
\label{Duebd}
\|Du_\e\|_{L^{\infty}(\Omega_0)}\leq C_5.
\end{equation}
Therefore, recalling (\ref{D2F1}), we obtain the following refined estimate on the Hessian of $F^1_\e$ when evaluated at $Du_\e(x)$ where $x\in\Omega_0$:
\begin{equation}
\label{D2F1refined}
(F^{1}_{\e, p_i p_j}(x, Du_\e(x))) \leq C_\ast(\e) I_2~\quad\text{for all } x\in\Omega_0.
\end{equation}
where
\begin{equation*}  
C_\ast(\e) =\left\{
 \begin{alignedat}{1}
  (q-1) (C_5^2 +\e)^{\frac{q-2}{2}}  \|\gamma\|_{L^{\infty}(\Omega_0)}~&\text{if} ~  q\geq 2, \\\
\e^{\frac{q-2}{2}}\|\gamma\|_{L^{\infty}(\Omega_0)}~&\text{if}~1<q<2.
 \end{alignedat} 
  \right.
\end{equation*} 
From the universal a priori $L^{\infty}(\Omega)$ estimates (\ref{uebd}) 
for $u_\e$ and the refined Hessian estimate (\ref{D2F1refined}),
we can establish the a priori $W^{4, s}(\Omega)$ estimates (\ref{uew4s}) for $u_\e$ as in \cite[Theorem 4.1]{LeCPAM}. 
The proof of (i) is now complete.

\vskip 10pt

(ii) Let $u_\e\in W^{4, s}(\Omega)$ $(s>n)$ be a solution to (\ref{Abreuq})-(\ref{feq}) where $\e\leq \e_0$. Then (\ref{uebd}) and (\ref{Duebd}) hold.
Thus, for $x\in\Omega_0$, we have
$$|D_p F^1_{\e}(x, Du_\e)|= |Du_\e|  (|Du_\e|^2 +\e)^{\frac{q-2}{2}}\gamma(x) \leq (C_5^2 +\e)^{\frac{q-1}{2}}  
\|\gamma\|_{L^{\infty}(\Omega_0)}\leq  (C_5^2 +1)^{\frac{q-1}{2}}  \|\gamma\|_{L^{\infty}(\Omega_0)}. $$
Now, we argue exactly as in the proof of Theorem 1.4 (ii) in \cite{Le7}, which uses the universal boundedness of $|D_p F^1_{\e}(x, Du_\e)|$,  to complete the proof of (ii). 
We omit the details.
\end{proof}

\vglue 0.5cm
{\bf Acknowledgements.} The authors would like to thank the referee for helpful comments on the manuscript.

\end{document}